\numberwithin{equation}{section}
\newcommand{\N}{\mathbb{N}}
\newcommand{\Z}{\mathbb{Z}}
\newcommand{\eps}{\varepsilon}
\newcommand{\norm}[1]{\left\lVert#1\right\rVert}
\newcommand{\act}{\curvearrowright}
\newcommand{\defeq}{\vcentcolon=}
\newtheorem{thm}{Theorem}[section]
\newtheorem{defn}[thm]{Definition}
\newtheorem{prop}[thm]{Proposition}
\newtheorem{cor}[thm]{Corollary}
\newtheorem{lem}[thm]{Lemma}
\newtheorem{ex}[thm]{Example}
\renewcommand{\phi}{\varphi}
\renewcommand{\d}{\mathrm{d}}
\date{}
\title{Quantitative Amenability for Actions of Finitely Generated Groups}
\begin{document}
\author{Zihan Xia}
\maketitle

\begin{abstract}
    We generalize the notion of isoperimetric profiles of finitely generated groups to their actions by measuring the boundary of finite subgraphings of the orbit graphing. We prove that like the classical isoperimetric profiles for groups, decay of the isoperimetric profile for an essentially-free action is equivalent to amenability of the action in the sense of Zimmer.  For measure-preserving actions, we relate the isoperimetric profiles of the actions and the group.
\end{abstract}

\section{Introduction}
 Let $G$ be a finitely generated discrete group with a finite symmetric generating set $S$. Recall that the isoperimetric profile of $(G,S)$ is defined as:
\[
\mathcal{I}_{G}(n) = \inf_{\substack{F\subset G\\
|F|\leq n}}
\frac{|\partial F|}{|F|}
\]
where $|\ |$ is the counting measure and $\partial F$ is the boundary set with respect to the generating set $S$. This is closely related to the F\o lner functions, which is the inverse of the isoperimetric profile in some sense. These two can be related by the inequality of Coulhon-Saloff-Coste \cite{C-S-C}. Such isoperimetric profiles for groups have two properties:
\begin{enumerate}
    \item The asymptotics of the isoperimetric profile for a finitely generated group are independent of the chosen generators.
    \item the isoperimetric profile converges to zero if and only if $G$ is amenable.
\end{enumerate}

These isoperimetric profiles and F\o lner functions have been studied by Vershik \cite{folner}, Pittet \cite{pittet2000isoperimetric}, Stankov \cite{BSgroupfolner}, Cavaleri \cite{CAVALERI2018388}, Erschler \cite{Erschler} and many others. In this paper we construct isoperimetric profiles for group actions, and we will prove that they satisfy analogue of the above-mentioned properties (1) and (2).
\\
Now assume that $(X,\mu)$ is a standard probability measure space and $(G,S)$ acting on it. The notion of isoperimetric profile for groups inspires us to construct the similar notion of isoperimetric profile for such action $(G,S)\act (X,\mu)$. A good news is that the boundary of any point stays in its own orbit. Therefore, instead of chosing arbitrarily measurable set $F\subset X$ and computing the boundary ratio, we partitioned each orbit into components of finite points. Since every group action induces an orbit measurable equivalence relation $\mathcal{R}$, we choose the measurable subequivalence relation of $\mathcal{R}$ to be such partition. In this way, the isoperimetric profile for the actions is computed by the average of the boundary ratio of the orbit subequivalence classes, and then take infimum. In the case of measure-preserving action, such formula can be written as the following definition, and we decide to use this formula for the nonmeasure-preserving actions.

\begin{defn}
The isoperimetric profile $\mathcal{I}_{(G,S)\act X}(n)$ of an action $(G,S)\act (X,\mu)$ is defined and as follows: 
\[
\mathcal{I}_{(G,S)\act X}(n)\defeq\inf_{\substack{\theta_n \subset \mathcal{R} \\|\theta_n|\leq n}}
\mu(\partial \theta_n)=\inf_{\substack{\theta_n \subset \mathcal{R} \\|\theta_n|\leq n}}\mu\{x\in X | sx \notin [x]_{\theta_n} \text{ for some } s\in S\}
\]
We will omit $S$ and write $\mathcal{I}_{G\act X}(n)$ if the generating set is fixed and this does not cause confusion.
\end{defn}
In general, we can define the similar isoperimetric profiles for graphings, but currently we don't know if the relationship between the decay of $\mathcal{I}$ and amenability(Theorem \ref{iff}) still holds in this case.

We will prove that if the Radon-Nikodym derivatives of an action are in $L^\infty$, then the asymptotics of the isoperimetric profile of the action are independent of the choice of the generating set up to a multiplicative constant (see Proposition \ref{generatingset}). This is analogous to Property (1) of the isoperimetric profile for groups above. More generally, if the Radon-Nikodym derivatives are in $L^p$ for some $p>1$ then a change of generators will give a polynomial bound. There is also a result which is analogue of property (2), but in terms of Zimmer's amenability for action instead of amenability of groups. So here we give the following main result, that under a mild integrability hypothesis, the decay of the isoperimetric profile of an action characterizes amenability.


\begin{thm}\label{iff}
Let $(G,S)$ be a finitely generated infinite group acting essentially free and ergodically on a standard non-atomic probability measure space $(X,\mu)$, where $\mu$ is non-atomic. Assume the Radon-Nikodym derivative $ds_*\mu/d\mu$ is $L^{1+\delta}$-bounded for each $s\in S$ and some $\delta>0$. Then the action is amenable if and only if $\mathcal{I}_{G\act X}(n)\to 0$.
\end{thm}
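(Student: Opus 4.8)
The plan is to translate the statement into the language of the orbit equivalence relation $\mathcal{R}$ and to recognize $\mathcal{I}_{G\act X}(n)\to 0$ as a F\o lner-type condition. Since the action is essentially free, Zimmer amenability of $(G,S)\act(X,\mu)$ is equivalent to amenability of the measured equivalence relation $\mathcal{R}$, equipped with its measure class and Radon-Nikodym cocycle $D$; by the Connes--Feldman--Weiss theorem this is in turn equivalent to hyperfiniteness, i.e.\ to the existence (mod null) of an increasing exhaustion $\mathcal{R}=\bigcup_n\mathcal{R}_n$ by finite subequivalence relations. I would prove the two implications separately, using the exhaustion for the forward direction and building a Reiter sequence out of the near-optimal $\theta_n$ for the converse.

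For the forward implication, assume amenability and take $\mathcal{R}_n$ as above. For a fixed $s\in S$ and almost every $x$ one has $(x,sx)\in\mathcal{R}=\bigcup_n\mathcal{R}_n$, so $sx\in[x]_{\mathcal{R}_n}$ for all large $n$; the sets $\{x:sx\notin[x]_{\mathcal{R}_n}\}$ decrease to a null set and dominated convergence gives $\mu\{x:sx\notin[x]_{\mathcal{R}_n}\}\to 0$. Summing over the finite set $S$ yields $\mu(\partial\mathcal{R}_n)\to0$. The classes of $\mathcal{R}_n$ are finite but perhaps unbounded, so I would truncate: fixing $\eps>0$, choose $N$ with $\mu(\partial\mathcal{R}_N)<\eps$ and then $M$ with $\mu(A_M)<\eps$, where $A_M=\{x:\Abs{[x]_{\mathcal{R}_N}}>M\}$ (possible since a.e.\ class is finite), and measurably split each oversized $\mathcal{R}_N$-class into pieces of size at most $M$ to obtain $\theta\subset\mathcal{R}_N$ with $\Abs{\theta}\le M$. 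New boundary is created only inside split classes, so $\partial\theta\subseteq\partial\mathcal{R}_N\cup A_M$ and $\mu(\partial\theta)<2\eps$, giving $\mathcal{I}_{G\act X}(M)<2\eps$. This direction uses no integrability hypothesis.

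For the converse, assume $\mathcal{I}_{G\act X}(n)\to0$ and choose finite $\theta_n\subset\mathcal{R}$ with $\Abs{\theta_n}\le n$ and $\mu(\partial\theta_n)\to0$. Set $\lambda_n^x\defeq\Abs{[x]_{\theta_n}}^{-1}\mathbf{1}_{[x]_{\theta_n}}$, the uniform probability on the $\theta_n$-class, viewed as a measurable field of unit vectors in $\ell^1([x]_{\mathcal{R}})$; this is the candidate Reiter sequence. The key observation is that $\lambda_n^{sx}=\lambda_n^x$ whenever $x$ and $sx$ lie in the same $\theta_n$-class, so $\norm{\lambda_n^{sx}-\lambda_n^x}_1=0$ for $x\notin\partial\theta_n$ and $\le 2$ otherwise. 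In the measure-preserving case this already gives $\int_X\norm{\lambda_n^{sx}-\lambda_n^x}_1\,d\mu(x)\le 2\,\mu(\partial\theta_n)\to 0$, the Reiter condition, and amenability follows. In general, the Reiter condition characterizing amenability of $\mathcal{R}$ within its measure class is twisted by the cocycle $D$, and the relevant error takes the shape $\int_{\partial\theta_n}\Phi\,d\mu$ for an explicit factor $\Phi$ built from the values $D(x,sx)^{\pm1}$. Here the hypothesis enters: since $S$ is symmetric, both $ds_*\mu/d\mu$ and $ds^{-1}_*\mu/d\mu$ are $L^{1+\delta}$, so the cocycle factor lies in $L^{1+\delta}$ and is uniformly integrable; Hölder's inequality then gives $\int_{\partial\theta_n}\Phi\,d\mu\le\norm{\Phi}_{1+\delta}\,\mu(\partial\theta_n)^{\delta/(1+\delta)}\to0$, so the twisted Reiter condition holds and the action is amenable.

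The forward direction is essentially bookkeeping once Connes--Feldman--Weiss is available, and the measure-preserving converse is immediate from the boundary estimate. I expect the main obstacle to be the converse in the genuinely non-measure-preserving regime: the profile is defined through the \emph{unweighted} boundary $\mu(\partial\theta_n)$, whereas amenability of a measured relation is a measure-class invariant whose natural F\o lner condition carries the Radon-Nikodym weight. Reconciling the two is exactly the role of the $L^{1+\delta}$ bound, which upgrades ``small unweighted boundary'' to ``small weighted boundary'' via uniform integrability---and this is why mere $L^1$ control (automatic, hence useless here) does not suffice. The remaining care is in confirming that the field $x\mapsto\lambda_n^x$ is genuinely measurable (standard, from the Borel structure of finite subequivalence relations) and in pinning down the precise cocycle-twisted Reiter functional equivalent to amenability, where most of the technical work resides.
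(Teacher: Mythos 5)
Your forward direction is correct and in fact takes a cleaner route than the paper: the paper invokes the Fraczyk--Van Limbeek restriction lemma (Lemma \ref{lem3.7}) to get a large subset $Z$ on which the restricted relation has bounded classes, and then must pay for the points outside $Z$ by estimating $\mu(SZ^c)$ via H\"older --- so the paper's forward implication actually uses the $L^{1+\delta}$ hypothesis, whereas your truncation of the hyperfinite exhaustion ($\partial\theta\subseteq\partial\mathcal{R}_N\cup A_M$) avoids translating any small set and needs no integrability at all. That part stands.

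The converse, however, has a genuine gap, and it sits exactly where you flag ``most of the technical work.'' Whatever amenability criterion you verify --- a.e.\ asymptotic invariance of $\lambda_n^x$ along $\mathcal{R}$, an integrated Reiter condition, or hyperfiniteness --- you must control $\norm{\lambda_n^{gx}-\lambda_n^x}_1$ for \emph{arbitrary} $g\in G$, not just for generators: the reduction to generators via the triangle inequality produces terms of the form $\norm{\lambda_n^{sy}-\lambda_n^y}_1$ evaluated at $y=s_{j-1}\cdots s_1x$, i.e.\ it requires $\mu(h\,\partial\theta_n)\to 0$ for words $h$ of every fixed length $k$. Your proposed estimate applies H\"older once to a factor $\Phi$ built from single-generator values $D(x,sx)^{\pm 1}$, but for a word $h$ of length $k\geq 2$ the derivative $\d h_*\mu/\d\mu$ is a product of $k$ functions each merely in $L^{1+\delta}$, which need not lie in any $L^p$ with $p>1$; a single application of H\"older therefore does not close the argument. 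One must instead iterate the one-step bound of Lemma \ref{weakct} $k$ times, obtaining $\mu(hA)\lesssim\mu(A)^{1/q^{k}}$ --- this is precisely the paper's estimate (\ref{kthbdry}) for the $k$-th boundary $\partial^k\theta_n$. The paper then uses this to show the first-exit index $\alpha_n^{\theta}\to\infty$ in measure, extracts a subsequence $\sigma_i$ whose classes are nested off sets of summable measure, and patches these into an increasing exhaustion of the orbit relation, proving hyperfiniteness directly. Until you either carry out that iteration and the subsequent nesting argument, or exhibit a precise generator-only Reiter criterion for nonsingular relations together with a proof that your $\lambda_n$ satisfy it, the converse is not established.
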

This result based on a significant proerty from Connes-Feldman-Weiss \cite{connes_feldman_weiss_1981} that the orbit equivalence relation being hyperfinite is equivalent to the action being amenable in Zimmer's sense. Such property build up a relationship between the isoperimetric profile of an action and the amenability of actions. 
\noindent\textbf{Remark.} There are many examples of actions satisfying the integrability assumpted in Theorem \ref{iff}: Measure preserving actions should be the most common cases. In addition, if $m$ be a probability measure on $S$ such that $\mu$ is $m$-stationary, which then tell us $\mu$ actually has $L^\infty$ Radon-Nikodym derivative with respect to $S$. In particular, this is true for a finitely generated group acting on it's poisson boundary.\\
Next, we study the isoperimetric profiles of probability measure-preserving actions.
\begin{thm}\label{sm}
Let $G$ be a finitely generated group and suppose $G\act(X,\mu)$ is probability measure preserving and essentially free. If $\mathcal{I}_{G\act X}(n)\to 0$ then $G$ is amenable. Further, the converse is true if the group action is ergodic.
\end{thm}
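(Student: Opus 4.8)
The plan is to prove the two implications separately, and the first point to record is that only the converse will need ergodicity. For the forward implication I would argue directly, manufacturing genuine F\o lner sets in $G$ out of finite subequivalence relations of small boundary; for the converse I would reduce to Theorem~\ref{iff}, which applies here because the measure-preserving hypothesis makes every Radon--Nikodym derivative $ds_*\mu/d\mu$ identically $1$, hence trivially $L^{1+\delta}$-bounded.

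\emph{Forward direction.} Suppose $\mathcal{I}_{G\act X}(n)\to 0$, fix $\epsilon\in(0,1)$, and for $n$ large choose a finite subequivalence relation $\theta\subset\mathcal{R}$ with classes of size at most $n$ and $\mu(\partial\theta)<\epsilon$. The structural input is that, since the action preserves $\mu$, the disintegration of $\mu$ over the space $Y$ of $\theta$-classes (with pushforward probability measure $\nu$) has conditional measures equal to the normalized counting measure on each finite class: any two points of a common class are matched by an element of the full group of $\theta$, which preserves $\mu$, so all points of a class carry equal conditional weight. Writing $\partial_S C=\{y\in C: sy\notin C \text{ for some } s\in S\}$ for the boundary of a class inside its own orbit, this yields the identity
\[
\mu(\partial\theta)=\int_Y \frac{|\partial_S C|}{|C|}\,d\nu(C).
\]
A Markov inequality then forces the classes with $|\partial_S C|/|C|<\sqrt{\epsilon}$ to have positive $\nu$-measure. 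Finally essential freeness transports such a class back to the group: for a representative $x\in C$ the orbit map $g\mapsto gx$ is injective, so $C=Fx$ for a unique finite $F\ni e$ in $G$, and $g\mapsto gx$ carries $\partial_S F=\{h\in F: sh\notin F \text{ for some } s\in S\}$ bijectively onto $\partial_S C$; hence $|\partial_S F|/|F|<\sqrt{\epsilon}$. Letting $\epsilon\to0$ produces a F\o lner sequence in $G$, so $G$ is amenable. Note this uses neither ergodicity nor infiniteness of $G$.

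\emph{Converse.} Assume $G$ is amenable and the action is ergodic. If $G$ is finite the statement is immediate: for $n\ge|G|$ one may take $\theta=\mathcal{R}$, whose boundary is empty, so $\mathcal{I}_{G\act X}(n)=0$. If $G$ is infinite I would first note that an ergodic, essentially free, measure-preserving action of an infinite group is automatically non-atomic, since an atom would have a $G$-orbit consisting of atoms of equal mass, hence a finite orbit of full measure by ergodicity, forcing $G$ finite by freeness. Thus all hypotheses of Theorem~\ref{iff} hold. Because $G$ is amenable, every action of $G$ --- in particular this one --- is amenable in the sense of Zimmer (an invariant mean on $\ell^\infty(G)$ furnishes the required measurable field of means along the orbits), so Theorem~\ref{iff} gives $\mathcal{I}_{G\act X}(n)\to0$.

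\emph{The main obstacle.} In the forward direction the entire weight rests on the uniformity of the conditional measures, which is exactly where measure-preservation is indispensable; without it the average-of-ratios identity collapses and no honest F\o lner set in $G$ can be read off. For the converse, the genuine difficulty --- producing, from a F\o lner sequence in $G$, measurable partitions of the orbits into approximately invariant finite tiles --- is the Ornstein--Weiss quasi-tiling (Rokhlin) machinery for amenable actions, with ergodicity entering precisely there. Routing the converse through Theorem~\ref{iff} packages that machinery into an already-established result, leaving only the standard passage from amenability of $G$ to Zimmer-amenability of the action to verify.
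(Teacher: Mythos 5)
Your proposal is correct and follows essentially the same route as the paper: the forward direction is the paper's Proposition \ref{folnerrelation} (disintegrate $\mu(\partial\theta_n)$ over the classes using that measure-preservation forces uniform conditional measures, then use essential freeness to pull a class with small boundary ratio back to a F\o lner set in $G$, giving $\mathcal{I}_{G\act X}(n)\geq\mathcal{I}_G(n)$), and the converse is obtained exactly as in the paper by feeding Zimmer-amenability of any action of an amenable group into Theorem \ref{iff}. Your added checks that the finite-group case is trivial and that ergodic free pmp actions of infinite groups are automatically non-atomic are welcome refinements the paper leaves implicit, but they do not change the argument.
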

Finally, given an amenable action $G\act (X,\mu)$, it would be interesting to obtain bounds on $\mathcal{I}_{G\act X}$ and its relationship with the classical isoperimetric profile $\mathcal{I}_G$. For probability measure preserving actions, we will prove that isoperimetric profile for the action $G\act X$ is bounded below by the isoperimetric profile of the group $G$. Furthermore, that bound can be reached if there exists a sequence of F\o lner tilings which has isoperimetric ratio asymptotic to the isoperimetric profile of $G$. For the definition of a tiling, see Definition \ref{tiling}. There are many groups which can be tiled by F\o lner sets, and for some groups we can also find F\o lner sets with optimal ratio. For example $\Z^d$ for $d\in\N$ and the Heisenberg group. But we still do not know which group admit F\o lner tilings, never mind optimal ones. However, Downarowicz, Huczek and Zhang \cite{tilings} proved that any amenable group admits a ``multitilings'' whose isoperimetric ratio can be chosen arbitrarily small

\begin{thm}\label{folnertilings}
Let $G$ be a finitely generated infinite amenable group. Then there exists a sequence of multi-tiles $(T_n)_{n\in\N}$ where $T_n=(T_{1n},\dots T_{Nn})$ and $N=N(n)$, such that for each $1\leq i \leq N$, $|T_{in}|\leq n$ and $\max\limits_{1\leq i\leq N}\frac{|\partial T_{in}|}{|T_{in}|}\xrightarrow{n \to \infty} 0$. For every multi-tile $T_n$ and any free probability measure-preserving action $G\act(X,\mu)$, we have
    \begin{equation}
        \mathcal{I}_{G\act X}(n)\leq \max_{1\leq i\leq N(n)}\frac{|\partial T_{in}|}{|T_{in}|}
    \end{equation}
In particular, if $\max\limits_{1\leq i\leq N(n)}\frac{|\partial T_{in}|}{|T_{in}|}$ is asymptotic to $\mathcal{I}_G(n)$, then $\mathcal{I}_{G\act X}(n)$ is asymptotic to $\mathcal{I}_G(n)$.
\end{thm}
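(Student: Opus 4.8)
The plan is to split the statement into three independent pieces: the combinatorial input, the transport of a group tiling to the action, and a final sandwiching argument. The existence of multi-tiles $(T_n)_{n}$ with $|T_{in}|\le n$ and $\max_i |\partial T_{in}|/|T_{in}|\to 0$ is exactly the Downarowicz--Huczek--Zhang tiling theorem \cite{tilings}, so I would simply invoke it and spend no effort there. The genuine content is the inequality $\mathcal{I}_{G\act X}(n)\le \max_i |\partial T_{in}|/|T_{in}|$ for a \emph{fixed} multi-tile and an \emph{arbitrary} free probability measure preserving (p.m.p.) action; the last ``in particular'' clause is then immediate from this upper bound together with the companion lower bound for p.m.p.\ actions stated above.

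To prove the inequality, fix $n$ and set $M(n)=\max_{1\le i\le N}|\partial T_{in}|/|T_{in}|$. Since the action is free, identifying the orbit of a basepoint $x_0$ with $G$ via $g\mapsto gx_0$ turns the orbit graphing $\{(x,sx):s\in S\}$ into the Cayley graph of $(G,S)$, and a right translate $T_{in}c$ becomes a graph-isomorphic copy of the shape $T_{in}$ sitting inside the orbit, realized as $\{t(cx_0):t\in T_{in}\}$. The task is therefore to carve almost every orbit into such copies in a measurable, orbit-consistent way. For each $\epsilon>0$ I would apply the Ornstein--Weiss (Rokhlin) lemma for free p.m.p.\ actions of amenable groups, using the prescribed DHZ shapes $T_{in}$, to obtain a measurable castle: base sets $B_1,\dots,B_N\subset X$ whose towers $\{tB_i:t\in T_{in}\}$ are disjoint and cover $X$ up to a remainder $E$ with $\mu(E)<\epsilon$. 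Define $\theta_n$ to have as classes the individual tiles $\{t\,b:t\in T_{in}\}$ for $b\in B_i$, together with singletons on $E$; then $\theta_n\subset\mathcal{R}$ is a subequivalence relation all of whose classes have size $\le n$.

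The boundary estimate is then a bookkeeping computation driven by measure-preservation. A point $t\,b$ in a type-$i$ tile lies in $\partial\theta_n$ precisely when $t\in\partial T_{in}$, and since $\mu(tB_i)=\mu(B_i)$ the towers contribute boundary of total measure $\sum_i |\partial T_{in}|\,\mu(B_i)$; every point of $E$ is a boundary point because freeness forbids $sx=x$. Using $\sum_i |T_{in}|\mu(B_i)=\mu(X\setminus E)\le 1$ and $|\partial T_{in}|\le M(n)|T_{in}|$, I obtain
\[
\mu(\partial\theta_n)\le \sum_i |\partial T_{in}|\,\mu(B_i)+\mu(E)\le M(n)\sum_i |T_{in}|\,\mu(B_i)+\epsilon\le M(n)+\epsilon .
\]
Because $\mathcal{I}_{G\act X}(n)$ is defined as an infimum over admissible $\theta_n$, letting $\epsilon\to 0$ gives $\mathcal{I}_{G\act X}(n)\le M(n)$. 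For the final clause, combine this with the lower bound $\mathcal{I}_{G\act X}(n)\ge\mathcal{I}_G(n)$ for p.m.p.\ actions established above: then $\mathcal{I}_G(n)\le\mathcal{I}_{G\act X}(n)\le M(n)$, and if $M(n)$ is asymptotic to $\mathcal{I}_G(n)$ the outer terms are asymptotically equal, forcing $\mathcal{I}_{G\act X}(n)\sim\mathcal{I}_G(n)$.

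The step I expect to be the main obstacle is the transport in the second paragraph: producing a measurable tiling of $X$ by the specific DHZ shapes $T_{in}$ with arbitrarily small remainder. There is no measurable way to pick one basepoint per orbit (an ergodic infinite p.m.p.\ relation has no positive-measure transversal), so the naive pullback of the fixed group tiling fails, and one must genuinely invoke the Ornstein--Weiss machinery to realize the prescribed shapes as a castle rather than merely generic F{\o}lner shapes. I would need to verify carefully that the shapes can be prescribed (not just taken sufficiently invariant) and that no boundary is created at the interface with the remainder beyond what is already counted in $\partial T_{in}$; granting that, the $\epsilon\to 0$ passage inside the infimum removes the remainder term and yields the clean inequality.
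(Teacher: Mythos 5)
Your proposal is correct and follows essentially the same route as the paper: invoke Downarowicz--Huczek--Zhang for the existence of F\o lner multi-tiles, use an Ornstein--Weiss--type Rokhlin lemma to realize the prescribed shapes as a disjoint castle covering all but measure $\eps$, define the subequivalence relation whose classes are the individual tiles plus singletons on the remainder, and run the same boundary bookkeeping before combining with the lower bound $\mathcal{I}_{G\act X}(n)\geq\mathcal{I}_G(n)$. The one step you flag as needing verification --- that the castle can be built with the \emph{prescribed} multi-tile shapes rather than merely sufficiently invariant ones --- is precisely what the paper supplies as its Theorem \ref{rokhlin} (the Rokhlin Lemma for multi-tiles, proved by adapting the Ornstein--Weiss argument), so your outline matches the paper's decomposition exactly.
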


Take $G=\Z^d$ for an easy example. Then one of the tilings is the collection of hypercubes. Moreover, the isoperimetric ratio of such tilings is are exactly same as the isoperimetric ratio of this group. So for $G=\Z^d \act X$, we have $\mathcal{I}_{G\act X} \sim \mathcal{I}_G$ and the value will could be easily shown which is asymptotic to $\frac{2d}{n^{1/d}}$. There is one more example following the \ref{cor2} which discuss about the actions of discrete Heisenberg group.

In general, we can also extend the definition of $F_{G\act X}$ from actions to the graphing. But currently we don't know which is the concept for graphings to correspond to being "free" for group actions. Therefore, the generalization of most theorems in this papers are not yet clear.

The paper is organized as follows: in Section \ref{background}, then we will introduce our new isoperimetric profiles in Section \ref{newdefn} and prove Theorem \ref{iff} in Section \ref{proveiff}. In Section 5, we will talk about the case when the action is probability measure preserving, including Theorem \ref{sm} and Theorem \ref{folnertilings}.

\subsection*{Acknowledgement}
Thanks to my supervisor Wouter Van Limbeek for providing me suggestions on formatting my first paper and checking the grammar mistakes. In addition, thanks to Tianyi Zheng and Mikolaj Fraczyk for suggestions about for paper.

\section{Background}\label{background}
\subsection{Isoperimetric profiles for the groups}
Now we first define the isoperimetric profiles of finitely generated groups. Let $S$ be a generating set of the group $G$. We say $S$ is \emph{symmetric} if $ s\in S$ implies $s^{-1}\in S$.
\begin{defn}
Assume $G$ is an finitely generated group and $S$ is a finite symmetric generating set. The isoperimetric profile of $G$ with respect to $S$ is defined as 
\[
\mathcal{I}_{(G,S)}(n)=\inf_{|F|\leq n}\frac{|\partial F|}{|F|}
\]
We write $\mathcal{I}_G(n)$ if there is no confusion.
\end{defn}
\noindent\textbf{Remark.} The isoperimetric profile for the group $G$ is independent of the set $S$ up to a multiplicative constant.\\ 
Let us recall the amenability defined via F\o lner sets and the isoperimetric profile for groups.
\begin{defn}\label{amen}
A discete group $G$ is amenable if for any finite subset $E\subset G$ and $\eps>0$, there exists a finite subset $F\subset G$ such that 
\[
|gF\Delta F|\leq \eps |F| \quad \forall g\in E
\]
\end{defn}
\noindent If $G$ is generated ny a finite symmetric set $S$, then $G$ is amenable if and only if
\[
\mathcal{I}_G(n)=\inf_{|F|\leq n}\frac{|\partial F|}{|F|}\xrightarrow{n\to \infty}0
\]
where $\partial F= \{g\in F| sg\notin F, \text{ some } s\in S \}$.
In general, there are also notions of inner and outer boundary. In this paper, $\partial F$ simply refer to the inner boundary $\partial_{in}F$ and outer boundary is defined as $\partial_{out}F\defeq \partial_{in}(F^c)$, and these notion can be also generalized to a $G$-space $X$, where $F$ can be assumed as a subset of $X$\\

For more informations on the amenable groups, see Juschenko's book \cite{juschenko2022amenability}

\begin{defn}
Let $T,K$ be fintie subsets of $G$ and let $\eps>0$. We say $T$ is $(K ,\eps)$-\emph{invariant} if 
\begin{equation}
    \frac{|KT\Delta T|}{|T|}\leq \eps
\end{equation}
\end{defn}
To generalize the Definition \ref{amen}, say the $K-$(inner) boundary of $T$ to be the set $\{g\in T:kg\notin T, \text{ some }k\in K\}$, and the subset which exculde the $K-$boundary called $K$-interior of $T$.

The notion of $(K,\eps)$-invariance is closely related to F\o lner sequences. It is easy to see that a sequence $(F_n)$ is a F\o lner sequence if and only if for every finite $K\subset G$ and $\eps>0$, $(F_n)$ is eventually $(K,\eps)$-invariant. 

\subsection{Graphings, Equivalence Relations}
In order to prove Theorem \ref{iff}, we need to work with measured equivalence relations. Assume $G$ is a finitely generated group acting on a probability measure space $(X,\mathcal{B},\mu)$. We say it is $non$-$singular$ if the action is measure-class preserving, i.e., for any measurable subset $A$ with $\mu(A)=0$ and any $g\in G$, we have $g_*\mu(A)=\mu(g^{-1}A)=0$. For a nonsingular action, the Radon-Nikodym derivatives $\frac{\d g_*\mu}{\d\mu}$ are well-defined and $L^1$. We say that the action $G\act(X,\mu)$ is $essentially$ $free$ if for any non-trivial $g\in G$, $\mu(\text{Fix}(g))=0$, where $\text{Fix}(g)\defeq\{x\in X\mid gx=x\}$. We see that an essentially free action is free $\mu$-a.e., because the action restricted to the co-null subset $X'=X\setminus(\cup_{g\in G}\text{Fix}(g))$ is free.\\

\begin{defn}
Let $(X,\mathcal{B},\mu)$ be a probability measure space and let $\varphi_i:X_i \to X$ be a finite family of non-singular measurable maps defined on some subsets $X_i\in\mathcal{B}$. We say the triple $(X,\mu,(\varphi_i)_{i\in I})$ is a \emph{graphing}. 
\end{defn}
Here we assume the family of maps $(\varphi_i)_{i\in I}$ is symmetric, which means that each $\varphi_i$ is injective and $(\varphi_i)_{i\in I}$ contains each inverse $\varphi_i^{-1}:\varphi_i(X_i)\to X_i$. Then we define the equivalence relation $\mathscr{R}$ to be the orbit equivalence relation generated by $(\varphi_i)_{i\in I}$. For example, for a nonsingular action of a countable group $G$ on $(X,\mathcal{B},\mu)$, let $S\subset G$ be a finite symmetric generating set. Then this action induces a graphing by choosing $(\varphi_s)_{s\in S}$ where $\varphi_s:X\to X$ is the left-translation by $s$.
Recall an equivalence relation $\mathscr{R}$ on $(X,\mathcal{B},\mu)$  is called \emph{measurable} it is also measurable as a subset of $X\times X$. For example, the orbit equivalence relation of $G$ on $X$ is measurable. And $\mathscr{R}$ is $nonsingular$ if all the $\varphi_i$ for $i\in I$ are nonsingular. Given a triple $(X,\mu,\mathscr{R})$ where $\mathscr{R}$ is a nonsingular measurable equivalence relation, we say $\mathscr{R}$ is $hyperfinite$ if there exists a increasing sequence of finite equivalence relations $\mathscr{R}_n$ such that $\mathscr{R}=\cup_n\mathscr{R}_n$. For an equivalence relation, being hyperfinite is the same as being amenable

\begin{thm}[Connes-Feldman-Weiss \cite{connes_feldman_weiss_1981}]
A measurable equivalence relation is hyperfinite if and only if it is amenable
\end{thm}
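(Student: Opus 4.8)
The proof splits into the two implications, and I would dispose of the easy forward direction (hyperfinite $\Rightarrow$ amenable) first. If $\mathscr{R}=\bigcup_n\mathscr{R}_n$ with each $\mathscr{R}_n$ finite, then a.e. class $[x]_{\mathscr{R}}$ is exhausted by the nested finite classes $[x]_{\mathscr{R}_n}$, and these serve directly as a measurable Følner filtration: averaging the counting measure over $[x]_{\mathscr{R}_n}$ produces a field of probability vectors on the classes that is asymptotically invariant under the generating pseudogroup, hence an invariant mean in the limit. This is a Reiter/Day-style argument and presents no real difficulty.

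The substance is the converse, amenable $\Rightarrow$ hyperfinite, and here I would follow the structure of Connes--Feldman--Weiss. The plan is, first, to extract from amenability a \emph{measurable Reiter sequence}: a sequence of measurable assignments $x\mapsto f_n^x\in\ell^1([x]_{\mathscr{R}})$ of probability vectors that are asymptotically $\mathscr{R}$-invariant, in the sense that $\norm{\gamma\cdot f_n^x-f_n^{\gamma x}}_1\to 0$ in an integrated sense for each element $\gamma$ of the generating pseudogroup. The existence of an invariant mean supplies such vectors weak-$*$; the passage to norm-asymptotic invariance is the usual convexity (Day) argument applied fibrewise and made measurable via a measurable-selection theorem.

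Next I would convert the Reiter functions into genuine Følner subsets of the classes. Applying the layer-cake (Namioka) trick to each $f_n^x$ --- that is, integrating over the sublevel sets $\{y:f_n^x(y)>t\}$ in $t$ --- produces measurable finite subsets $F_n(x)\subset[x]_{\mathscr{R}}$ whose boundary-to-size ratio is small on average, establishing a measurable Følner property for $\mathscr{R}$. Up to this point the arguments are essentially soft.

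The main obstacle is the final step: manufacturing from these fibrewise Følner sets an actual increasing sequence of finite subequivalence relations $\mathscr{R}_n\nearrow\mathscr{R}$. I would carry this out by a measurable tiling (Rokhlin-tower) argument: using the $F_n$ as approximate tiles, I would select, measurably along the orbits and via a maximality/exhaustion argument, a family of pairwise disjoint translates covering all but an $\eps$-fraction of each orbit, and declare each tile to be a class of a finite relation. The delicate points --- where the real work lies --- are (i) making the tile selection genuinely measurable in $x$, (ii) controlling overlaps and the uncovered remainder so that coverage tends to full measure, and (iii) arranging the tilings at successive scales to be nested, so that the $\mathscr{R}_n$ increase with $\bigcup_n\mathscr{R}_n=\mathscr{R}$. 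A diagonalization over the scale $n$ and the generating elements, together with a Borel--Cantelli argument to discard the error sets, completes the construction.
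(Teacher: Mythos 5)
The paper does not prove this statement: it is quoted as a background result with a citation to Connes--Feldman--Weiss, so there is no proof of record to compare yours against.

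Judged on its own merits, your forward direction is fine: once $(x,\gamma x)\in\mathscr{R}_n$, the uniform probability measures on $[x]_{\mathscr{R}_n}$ and $[\gamma x]_{\mathscr{R}_n}$ literally coincide, so for each fixed element of the full pseudogroup the fibrewise averages are eventually exactly invariant a.e., and a weak-$*$ limit yields the invariant mean. Your converse correctly reproduces the architecture of the Connes--Feldman--Weiss argument (invariant mean $\to$ measurable Reiter functions via fibrewise Day convexity and measurable selection $\to$ fibrewise F\o lner sets via the Namioka layer-cake trick), but the final step --- converting fibrewise F\o lner sets into an increasing sequence of finite subrelations exhausting $\mathscr{R}$ --- is not an implementation detail to be dispatched by ``a maximality argument plus diagonalization''; it is where the theorem lives. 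Two concrete failure points: a maximal disjoint family of translates of the $F_n(x)$ need not retain the F\o lner property after disjointification (the leftover fragments can consist almost entirely of boundary), and there is no direct way to force the towers at scale $n+1$ to refine those at scale $n$. Connes--Feldman--Weiss circumvent the nesting problem by proving separately that an increasing union of hyperfinite subrelations is hyperfinite and by an absorption/exhaustion argument over the generators; neither ingredient appears in your sketch. So what you have is a correct road map rather than a proof. Since the paper treats the statement as imported, that level of detail is arguably appropriate here, but you should not present the last step as routine.
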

Actually, the following theorem tells us that under some conditions, the amenability of the measured equivalence relation is equivalent to amenability in the sense of Zimmer \cite{zimmer}
\begin{thm}[Adams-Elliot-Giordano \cite{amenable}]\label{essfree}
Assume $(X,\mu)$ is a standard measure space and $G$ acts ergodically on $(X,\mu)$. Then the action $G\act(X,\mu)$ is amenable in the sense of Zimmer if and only if the equivalence relation $\mathscr{R}_G$ induced by the action of $G$ on $(X,\mu)$ is amenable and the stability subgroup $G_x=\{g\in G;gx=x\}$ is amenable $\mu-a.e$.  
\end{thm}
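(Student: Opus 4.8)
The plan is to translate everything into the language of measured groupoids and to recognize Theorem~\ref{essfree} as the groupoid incarnation of the elementary fact that a group extension is amenable precisely when its kernel and quotient are amenable. I would associate to the action the action groupoid $\mathcal{G} = G \ltimes X$, whose unit space is $X$ and whose arrows are pairs $(g,x): x \to gx$. Zimmer's fixed-point definition of amenability for $G \act (X,\mu)$ is, upon unwinding the definitions, exactly amenability of $\mathcal{G}$ as a measured groupoid: for every Borel field $\{A_x\}$ of compact convex subsets of unit balls in separable dual Banach spaces, equipped with an affine $\mathcal{G}$-cocycle $\alpha(g,x): A_x \to A_{gx}$, there is a $\mathcal{G}$-invariant measurable section $x \mapsto a_x \in A_x$. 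The orbit relation $\mathscr{R}_G$ is itself a measured groupoid, and there is a canonical surjection $\pi:\mathcal{G}\to\mathscr{R}_G$, $(g,x)\mapsto(gx,x)$, whose isotropy over $x$ is exactly the stabilizer $G_x$. This exhibits the extension $\{G_x\}_x \hookrightarrow \mathcal{G} \xrightarrow{\pi} \mathscr{R}_G$, so the theorem becomes the assertion that $\mathcal{G}$ is amenable if and only if the bundle of stabilizers and the quotient $\mathscr{R}_G$ are both amenable.

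For the forward direction I would invoke the hereditary properties of amenability. Restricting a $\mathcal{G}$-invariant section to the isotropy subgroupoid shows the field $\{G_x\}$ is amenable, i.e.\ $G_x$ is amenable for $\mu$-a.e.\ $x$; and pulling back an $\mathscr{R}_G$-field along $\pi$, solving on $\mathcal{G}$, and observing that the resulting invariant section depends only on the $\mathscr{R}_G$-class shows $\mathscr{R}_G$ inherits amenability as a quotient. Both steps use only that invariant sections restrict and descend, so this direction is comparatively soft.

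The backward direction is the substantive one, and I would carry it out by two averaging steps. Given an affine $\mathcal{G}$-field $\{A_x\}$, first use amenability of each stabilizer: since $A_x$ is a compact convex $G_x$-set, the fixed-point set $A_x^{G_x}$ is nonempty, compact, and convex, and because the cocycle conjugates $G_x$ into $G_{gx}$ it carries $G_x$-fixed points to $G_{gx}$-fixed points with the isotropy acting trivially. Hence $\alpha$ descends to a genuine $\mathscr{R}_G$-cocycle on the field $\{A_x^{G_x}\}$. Second, apply amenability of $\mathscr{R}_G$ to obtain an $\mathscr{R}_G$-invariant measurable section of $\{A_x^{G_x}\}$; pulled back through $\pi$ this is precisely a $\mathcal{G}$-invariant section of $\{A_x\}$, giving amenability of $\mathcal{G}$.

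The hard part is making the first averaging step measurable. I must produce, in a $\mu$-measurably varying fashion, the nonempty $G_x$-fixed compact convex sets $A_x^{G_x}$, which requires a fiberwise, measurable version of the fixed-point theorem for amenable groups applied to the measurable field of amenable groups $\{G_x\}$ acting measurably on $\{A_x\}$. Establishing the joint measurability of $x \mapsto A_x^{G_x}$, and verifying that the descended assignment is again a measurable $\mathscr{R}_G$-cocycle, is where the real work lies and where one appeals to von Neumann--Aumann measurable selection. Once this fiberwise averaging is in place, the two-step reduction closes the argument.
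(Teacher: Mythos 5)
The paper itself offers no proof of this statement: it is quoted from Adams--Elliott--Giordano and used as a black box, so the only meaningful comparison is with the original source --- and your strategy is essentially theirs. Viewing $G\ltimes X$ as an extension of the orbit groupoid $\mathscr{R}_G$ by the isotropy bundle $\{G_x\}$, proving the forward direction by hereditarity, and proving the backward direction by first averaging over the amenable stabilizers via Day's fixed-point theorem and then over the amenable relation is the standard (and correct) route. Your justification for why the cocycle descends to $\{A_x^{G_x}\}$ is also right: two lifts of $(gx,x)\in\mathscr{R}_G$ differ by an element of $G_x$, which acts trivially on $A_x^{G_x}$.

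Two gaps remain, and you have flagged only one. First, as you acknowledge, the backward direction needs $x\mapsto A_x^{G_x}$ to be a measurable field of nonempty compact convex sets and the descended assignment to be a measurable $\mathscr{R}_G$-cocycle; this is where the real content sits (one route is a measurable field of invariant means on the stabilizers, built from a measurable enumeration of $G_x$ together with a selection theorem), and the proposal does not carry it out. Second, the forward direction is not as soft as you claim. ``Restricting a $\mathcal{G}$-invariant section to the isotropy subgroupoid'' does not show that each $G_x$ is amenable, because an arbitrary compact convex $G_x$-space attached to a single point $x$ need not be the fibre of any measurable $\mathcal{G}$-field, so Zimmer's fixed-point property cannot be applied to it directly. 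The standard fix is to apply the fixed-point property to one universal field --- the constant field whose fibre is the set of means on $\ell^\infty(G)$, with cocycle given by translation --- to obtain a measurable family $m_x$ with $g_*m_x=m_{gx}$; for $g\in G_x$ this makes $m_x$ a $G_x$-invariant mean on $\ell^\infty(G)$, which one then transports to a left-invariant mean on $\ell^\infty(G_x)$ using a transversal of $G_x\backslash G$. With these two steps supplied your outline does become a proof; note also that it nowhere uses the ergodicity hypothesis appearing in the statement.
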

In particular if the action is essentially free, then the action is amenable if and only if the orbit equivalence relation is amenable.\\
\subsection{Rokhlin Lemma}\label{sectionrokhlin}
Here we first give the definition of tiles and multi-tiles. Let $G$ be a countable amenable group.
\begin{defn}\label{tiling}
We say a finite subset $T\subset G$ is a \emph{tile} if the right translates of $T$ partition $G$, i.e., there exists a subset $ C\subset G$ such that $G=\sqcup_{c\in C}Tc$. This $C\subset G$ is called a tiling center.
\end{defn}

\begin{defn}
A finite collection of finite subsets $\{F_i\mid 0\leq i \leq m\}$ is called a \emph{multi-tile} if $e\in F_i$ for each $0\leq i\leq m$ and there exists a collection of subsets $\{C_i\mid 0\leq i\leq m\}$ called \emph{center set} such that $\{F_ic\mid 0\leq i\leq m, c\in C_i\}$ partition $G$.
\end{defn}

Many groups have the tiles, for example the cyclic groups and solvable groups. In general, all the \emph{elementary amenable groups} have tilings, but currently we still don't know whether every amenable group has a tile. Ornstein-Weiss \cite{bams/1183545203} generalized the Rokhlin Lemma from free $\Z$-actions to free $G$-actions and from first $n$ integers to an abstract tile: For a probability measure preserving free action, the translates of tiles will covers a large portion of the space. Since the existence of tilings is not well known for amenable groups so far, we will consider the notion of the multi-tiles. Downarowicz-Huczek-Zhang's \cite{tilings} proved the existence of multi-tile for amenable groups.
\begin{thm}[Downarowicz-Huczek-Zhang \cite{tilings}]\label{multitile}
Fix $\eps>0$ and a finite set $K\subset G$. There exists a multi-tile $(T_1,\dots, T_n)$ such that $T_i$ is $(K,\eps)$-invariant for each $1\leq i\leq n$.
\end{thm}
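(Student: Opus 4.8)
The statement bundles three assertions: the existence of a sequence of multi-tiles whose cells have size at most $n$ and whose maximal boundary ratio tends to $0$; the transfer inequality $\mathcal{I}_{G\act X}(n)\le \max_i |\partial T_{in}|/|T_{in}|$ for every free p.m.p.\ action; and the asymptotic consequence. My plan is to derive the first assertion from Theorem \ref{multitile} by relating invariance to the boundary ratio and reindexing by cell size, to obtain the second by realizing the abstract multi-tile as a measurable subequivalence relation of $\mathcal{R}$ through the Ornstein-Weiss Rokhlin lemma, and to deduce the third by squeezing against the earlier lower bound $\mathcal{I}_{G\act X}\ge \mathcal{I}_G$.

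The heart of the construction is the existence of the sequence. First I would convert $(S,\eps)$-invariance into a bound on the boundary ratio: if $T$ is $(S,\eps)$-invariant then $|\partial T|/|T|\le |S|\,\eps$. Indeed, writing $\partial T=\bigcup_{s\in S}\{g\in T:\, sg\notin T\}$, for each fixed $s$ the map $g\mapsto sg$ injects $\{g\in T:\, sg\notin T\}$ into $ST\setminus T$, so each of these $|S|$ sets has cardinality at most $|ST\setminus T|\le |ST\Delta T|\le\eps|T|$. Next I fix a sequence $\eps_k\downarrow 0$ and, for each $k$, apply Theorem \ref{multitile} with $K=S$ and $\eps=\eps_k$ to get a multi-tile $\mathcal{T}^{(k)}=(T^{(k)}_1,\dots,T^{(k)}_{m_k})$ all of whose members are $(S,\eps_k)$-invariant; being a finite family of finite sets, it has $M_k:=\max_i|T^{(k)}_i|<\infty$. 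I then reindex by size: for each $n$ set $k(n):=\max\{k\le n:\, M_k\le n\}$ and $T_n:=\mathcal{T}^{(k(n))}$, taking $T_n$ a trivial single-point tile for the finitely many small $n$ where this set is empty. By construction every member of $T_n$ has size $\le M_{k(n)}\le n$, and since each $M_{k_0}$ is finite we have $k(n)\ge k_0$ whenever $n\ge\max(M_{k_0},k_0)$, so $k(n)\to\infty$ and hence $\max_i|\partial T_{in}|/|T_{in}|\le |S|\,\eps_{k(n)}\to 0$, which is exactly the required decay.

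For the transfer inequality, given a free p.m.p.\ action and the exact multi-tiling $G=\bigsqcup_i\bigsqcup_{c\in C_i}T_{in}c$, I would use the Ornstein-Weiss Rokhlin lemma to produce measurable base sets $B_1,\dots,B_N\subset X$ for which the translates $\{gB_i:\, 1\le i\le N,\ g\in T_{in}\}$ are pairwise disjoint and cover $X$ up to a null set, and let $\theta_n$ be the subequivalence relation whose classes are the $T_{in}\cdot b$ ($b\in B_i$). Each class has at most $\max_i|T_{in}|\le n$ points, so $|\theta_n|\le n$. The p.m.p.\ hypothesis enters in the boundary count: a point $gb\in T_{in}b$ lies in $\partial\theta_n$ iff $sg\notin T_{in}$ for some $s$, i.e.\ iff $g\in\partial T_{in}$, so inside the type-$i$ region $A_i=\bigsqcup_{g\in T_{in}}gB_i$ the boundary is $\bigsqcup_{g\in\partial T_{in}}gB_i$; since the action preserves $\mu$ we have $\mu(gB_i)=\mu(B_i)$, whence $\mu(\partial\theta_n\cap A_i)=\tfrac{|\partial T_{in}|}{|T_{in}|}\mu(A_i)$. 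Summing over $i$ and using $\sum_i\mu(A_i)=1$ exhibits $\mu(\partial\theta_n)=\sum_i\tfrac{|\partial T_{in}|}{|T_{in}|}\mu(A_i)$ as a convex combination of the ratios, hence at most $\max_i|\partial T_{in}|/|T_{in}|$, and taking the infimum over admissible $\theta_n$ gives the claimed bound.

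The final clause is then a squeeze: the inequality just proved gives $\mathcal{I}_{G\act X}(n)\le\max_i|\partial T_{in}|/|T_{in}|$, which by hypothesis is asymptotic to $\mathcal{I}_G(n)$, while the lower bound $\mathcal{I}_{G\act X}(n)\ge\mathcal{I}_G(n)$ for p.m.p.\ actions established earlier in the paper supplies the reverse comparison, so the two profiles are asymptotic. I expect the main obstacle to lie in the transfer step, specifically in obtaining the bases $B_i$ so that the translated cells tile almost every orbit \emph{exactly} rather than merely approximately: the clean convex-combination identity relies on an exact cover of $X$, and if the Ornstein-Weiss machinery only yields a cover of measure $1-\delta$, I will have to absorb the uncovered remainder into an additional error term and drive $\delta\to0$ along the sequence while keeping all cell sizes below $n$. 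By contrast, the reindexing in the existence step is elementary, but it must be carried out so that the shrinking of $\eps_k$ and the size constraint $|T_{in}|\le n$ are satisfied simultaneously.
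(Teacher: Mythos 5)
There is a fundamental mismatch here: the statement you were asked to prove is Theorem \ref{multitile}, the Downarowicz--Huczek--Zhang result that for any $\eps>0$ and finite $K\subset G$ an amenable group admits a multi-tile (an \emph{exact quasitiling}) all of whose shapes are $(K,\eps)$-invariant. Your proposal never proves this. Instead it proves Theorem \ref{folnertilings} --- the existence of a sequence of multi-tiles with vanishing boundary ratio, the transfer inequality $\mathcal{I}_{G\act X}(n)\le\max_i|\partial T_{in}|/|T_{in}|$, and the asymptotic comparison with $\mathcal{I}_G$ --- and, worse, it does so by explicitly invoking Theorem \ref{multitile} as a black box (``for each $k$, apply Theorem \ref{multitile} with $K=S$ and $\eps=\eps_k$''). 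As a proof of the statement in question this is circular: the statement appears as an ingredient in its own proof, and nothing in your argument addresses why such an exact, $(K,\eps)$-invariant partition of $G$ exists in the first place.

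For calibration: the paper itself does not prove Theorem \ref{multitile} either; it imports it from \cite{tilings}, where the proof is a substantial piece of work --- one starts from Ornstein--Weiss-style quasitilings (which only cover a $(1-\eps)$-fraction of the group by $\eps$-disjoint translates), builds a congruent system of disjoint quasitilings, and then runs an iterative correction to upgrade approximate covering to an exact partition while retaining $(K,\eps)$-invariance of the shapes. None of these ideas appear in your proposal. The material you did write (the bound $|\partial T|/|T|\le|S|\eps$ from $(S,\eps)$-invariance, the reindexing by cell size, the Rokhlin-lemma construction of the subequivalence relation $\theta_n$, and the convex-combination estimate of $\mu(\partial\theta_n)$ with the error term from the uncovered remainder) is sound and in fact tracks the paper's own route to Theorem \ref{folnertilings} via Corollary \ref{cor}, Theorem \ref{rokhlin}, and Theorem \ref{cor2} quite closely --- but it answers a different question than the one posed.
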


Then we can immediately build F\o lner multi-tilings :

\begin{cor}\label{cor}
    Let $G$ be a finitely generated infinite amenable group. Then there exists a sequence of multi-tile $(T_n)$ where shapes are $(T_n)=(T_{1n},\dots, T_{Nn})$ and $N=N(n)$, such that each shape has size at most $n$ and $\max\limits_{1\leq i\leq N}\frac{|\partial T_{in}|}{|T_{in}|}\xrightarrow{n \to \infty} 0$.
\end{cor}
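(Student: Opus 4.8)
The plan is to obtain this as a direct consequence of Theorem~\ref{multitile}, bridged by the elementary observation that a highly invariant set has small boundary ratio, and finished by a reindexing that enforces the size bound $|T_{in}|\leq n$. The first step I would carry out is to record the dictionary between $(K,\eps)$-invariance and the isoperimetric ratio. Fix the generating set $S$ and suppose $S\subseteq K$. I claim every $(K,\eps)$-invariant finite $T$ satisfies $\frac{|\partial T|}{|T|}\leq |S|\eps$. Indeed, for each $g\in\partial T=\{g\in T:sg\notin T\text{ for some }s\in S\}$ pick a witness $s_g\in S$ with $s_gg\notin T$; the assignment $g\mapsto s_gg$ sends $\partial T$ into $ST\setminus T\subseteq KT\setminus T$, and since $S$ is symmetric any preimage of a point $h$ lies in $Sh$, so the map is at most $|S|$-to-one. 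Hence $|\partial T|\leq |S|\,|KT\setminus T|\leq |S|\,|KT\Delta T|\leq |S|\eps|T|$.

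Next I would fix an exhaustion $K_1\subseteq K_2\subseteq\cdots$ of $G$ with $S\subseteq K_1$ and $\bigcup_k K_k=G$, together with a sequence $\eps_k\to 0$ with $\eps_k\leq 1$. Applying Theorem~\ref{multitile} to each pair $(K_k,\eps_k)$ yields a multi-tile $\mathcal{T}^{(k)}=(T_1^{(k)},\dots,T_{m_k}^{(k)})$ all of whose shapes are $(K_k,\eps_k)$-invariant; by the estimate above, every shape of $\mathcal{T}^{(k)}$ has ratio at most $|S|\eps_k=\vcentcolon\delta_k$, and $\delta_k\to 0$.

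The one point that needs care is the size constraint, since Theorem~\ref{multitile} gives no direct control on the sizes of the shapes. Here I would observe that invariance forces shapes to be large: for any $t_0\in T_i^{(k)}$ we have $K_kt_0\subseteq K_kT_i^{(k)}$, so $|K_k|\leq |K_kT_i^{(k)}|\leq (1+\eps_k)|T_i^{(k)}|$, giving $|T_i^{(k)}|\geq |K_k|/2$. Thus the maximal shape size $M_k\defeq\max_i|T_i^{(k)}|$ satisfies $M_k\to\infty$ as $|K_k|\to\infty$ (here using that $G$ is infinite), and after passing to a subsequence I may assume $M_k$ is strictly increasing. Finally I reindex by $n$: for $n\geq M_1$ let $k(n)$ be the largest $k$ with $M_k\leq n$ and set $T_n\defeq\mathcal{T}^{(k(n))}$, while for the finitely many $n<M_1$ I take the singleton tile $\{e\}$. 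Every shape of $T_n$ then has size at most $M_{k(n)}\leq n$, and $\max_i\frac{|\partial T_{in}|}{|T_{in}|}\leq\delta_{k(n)}$, which tends to $0$ because $k(n)\to\infty$.

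I expect the main (and fairly mild) obstacle to be exactly this tension between the two demands placed on $T_n$: Theorem~\ref{multitile} lets the shapes be made arbitrarily invariant but is silent about their size, whereas the corollary insists on shapes of size at most $n$. The content of the argument is therefore the lower bound $|T_i^{(k)}|\geq |K_k|/2$, which shows that high invariance automatically forces large shapes; this is precisely what makes the budget ``$|T_{in}|\leq n$'' compatible with driving $\delta_{k(n)}\to 0$, so that no single multi-tile must satisfy both requirements at once.
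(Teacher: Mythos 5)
Your proof is correct and follows exactly the route the paper intends: the paper offers no written proof of Corollary~\ref{cor}, simply asserting it is immediate from Theorem~\ref{multitile}, and your argument supplies the two details being glossed over, namely the comparison $|\partial T|\leq |S|\,|KT\Delta T|$ between $(K,\eps)$-invariance and the boundary ratio, and the reindexing that reconciles the size bound $|T_{in}|\leq n$ with the fact that highly invariant shapes are necessarily large. Nothing further is needed.
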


A multi-tile is actually called an \emph{exact quasitiling} in Downarowicz-Huczek-Zhang \cite{tilings} and the notion of ``quasitiling" in paper is modified from Ornstein-Weiss \cite{bams/1183545203}. The following theorem is the generalized Rokhlin Lemma for multi-tiles.

\begin{thm}[\emph{Rokhlin Lemma for Multi-tiles}]\label{rokhlin}
    Let $G$ be an amenable group and $G\act (X,\mu)$ be a free probability measure preserving action. $T=(T_1,\dots,T_n)$ be a multi-tile. Then for any $\eps>0$, there exists a collection of measurable subsets $A=(A_1,\dots,A_n)$ such that the subsets $\{ tA_i \mid t\in T_i \text{ and } 1\leq i\leq n \}$ are disjoint and $\mu(\cup_{i=1}^n T_iA_i)>1-\eps$.
\end{thm}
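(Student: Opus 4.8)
The plan is to transfer the exact tiling of $G$ to the action orbit by orbit, the only loss being forced by the impossibility of doing so measurably on the nose. To isolate the mechanism, suppose first that the action admitted a measurable transversal $O\subset X$ meeting each orbit exactly once. Writing $C=(C_1,\dots,C_n)$ for a center set of the multi-tile $T$, so that $\{T_ic : c\in C_i,\ 1\le i\le n\}$ partitions $G$, one would simply set $A_i = C_iO=\{co : c\in C_i,\ o\in O\}$. Freeness identifies the orbit of each $o\in O$ with $G$, and the group partition then shows that $\{ta : t\in T_i,\ a\in A_i,\ 1\le i\le n\}$ partitions $X$ exactly, with disjointness immediate; an honest transversal would even give $\eps=0$. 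The catch is that an ergodic (more generally, non-smooth) equivalence relation admits no measurable transversal, which is precisely why we must settle for covering $1-\eps$. Note also that a naive maximal disjoint family of tile-translates does \emph{not} work: a maximal packing can leave uncovered gaps that are smaller than every tile yet of large total measure (already visible for $G=\Z$ with an interval tile), so the exact tiling of $G$ must genuinely be used.

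First I would replace the exact transversal by an approximate one produced by the marker lemma. Since $G$ is infinite and the action is free, hence aperiodic, for any finite $E\subset G$ there is a Borel complete section that is $E$-separated, and such sections may be taken arbitrarily sparse. Choosing $E$ large compared with $\bigcup_i T_i^{-1}T_i$ and the section $M$ syndetic, the set $M$ meets almost every orbit in a net that is both well separated (so tiles centred near distinct markers cannot collide) and coarsely dense (so the marker cells exhaust the orbit). The marker $M$ serves as a system of local origins: for almost every $x$ there is a distinguished nearby marker $m$, and freeness writes $x=gm$ for a unique $g\in G$; declaring the tile through $x$ to be the one through $g$ in the fixed group tiling transfers the partition $\{T_ic\}$ to a partition of the bulk of each orbit.

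Next I would read off the base sets: let $A_i$ be the Borel set of those $x$ that are marked as centres of a shape-$i$ tile whose entire translate $T_ix$ lies inside the cell of its marker. By construction the retained tiles are pairwise disjoint, so $\{ta : t\in T_i,\ a\in A_i,\ 1\le i\le n\}$ is a disjoint family, and since the action is measure preserving,
\begin{equation}
    \mu\Big(\bigcup_{i=1}^n T_iA_i\Big)=\sum_{i=1}^n\sum_{t\in T_i}\mu(tA_i)=\sum_{i=1}^n |T_i|\,\mu(A_i).
\end{equation}
The uncovered part is exactly the waste: points lying in no retained tile, namely those within bounded distance of a cell boundary or of the locus where the local-origin choice is ambiguous. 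Measurability of every choice above is routine, as $G$ is countable and all the relevant sets are cut out by countably many translation conditions, so a measurable selection of the distinguished marker is available.

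The hard part is the quantitative control of this waste, i.e. proving $\mu(X\setminus\bigcup_i T_iA_i)<\eps$. The discarded region is a boundary layer of thickness at most $\max_i\operatorname{diam}(T_i)$ around the marker cells, so the estimate reduces to making the boundary-to-bulk ratio of those cells small. This is where amenability is essential: I would choose the net $M$ so that its cells are F\o lner sets whose fraction within $\max_i\operatorname{diam}(T_i)$ of the boundary is below $\eps$, which is possible by amenability (the cells can be taken to be the translates of a sufficiently invariant multi-tile as in Theorem \ref{multitile} or Corollary \ref{cor}). Combined with the measure-preservation identity above and the syndeticity of $M$, this yields $\mu(\bigcup_i T_iA_i)>1-\eps$. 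The two points genuinely needing care are therefore the F\o lner control of the marker cells and the measurable, conflict-free selection of local origins; the remainder is bookkeeping.
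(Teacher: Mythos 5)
Your overall architecture is the right one and matches the spirit of the paper's proof: transfer the group tiling to each orbit using a system of local origins, keep only the tiles that fit inside a cell, and charge the loss to a boundary layer whose relative measure is controlled by a F\o lner condition. The measure-preservation identity and the disjointness bookkeeping are fine. But there is a genuine gap at exactly the point you flag as ``needing care'': the marker lemma does not produce cells that are F\o lner. An $E$-separated syndetic complete section gives finite Voronoi-type cells, but nothing forces those cells to be $(\bigcup_i T_iT_i^{-1},\eps)$-invariant subsets of the orbit, and without that invariance the ``boundary layer of thickness $\max_i\operatorname{diam}(T_i)$'' can have measure close to $1$. Your proposed fix --- ``the cells can be taken to be the translates of a sufficiently invariant multi-tile as in Theorem \ref{multitile} or Corollary \ref{cor}'' --- is circular: those results only assert the existence of F\o lner multi-tiles inside the group $G$; implementing their translates as a measurable partition of (most of) $X$ is precisely a Rokhlin lemma of the kind you are trying to prove.

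The paper closes this gap with the Ornstein--Weiss covering lemma, which supplies measurable F\o lner towers $HU_1,\dots,HU_N$ covering measure $1-1/k$ but only with bounded overlap ($\sum_i\mu(HU_i)<k$, not $=1$). Because the towers overlap, one cannot simply tile each tower independently; the paper runs an inductive deletion argument, discarding at each stage the previously placed tiles that would collide with tiles of the newly considered tower, and showing the discarded points always lie in the $T_iT_i^{-1}T_jT_j^{-1}$-boundary of a tower, so the total loss is at most $\frac{1}{10k^2}\sum_i\mu(HU_i)<\eps$. That collision-resolution step is the real content of the proof and is absent from your sketch. Your outline would become a correct (and cleaner) argument if you replaced the marker lemma by the exact tiling theorem of Conley--Jackson--Kerr--Marks--Seward--Tucker-Drob cited in the paper, which does give a genuine measurable partition of $X$ into F\o lner cells that you could then sub-tile by the given multi-tile $T$; but as written, the production of measurable F\o lner cells is assumed rather than proved.
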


Acutally, Conley-Jackson-Kerr-Marks-Seward-Tucker-Drob \cite{generaltilings} gives a similar result, which says that all of the space $X$ can be partitioned by some multi-tile. However for us, the advantage of Theorem \ref{rokhlin} is that it applies to any milti-tile, so it will produce stronger bounds on isoperimetric profiles. The following proof of Theorem \ref{rokhlin} is basically generalized from the proof of the Rokhlin Lemma for tiles in Ornstein-Weiss \cite{bams/1183545203}. We first introduce some concepts from Ornstein-Weiss which help us to finish the proof.\\
    Let $H\subset G$ be a finite set. An $H-set$ is a set $A\subset X$ such that $\{hA: h\in H\}$ are pairwise disjoint. We also call $HA$ an $H$-tower with $A$ being the \emph{base} of the tower.
\begin{proof}[Proof of Theorem  \ref{rokhlin}]
    Suppose there is a free measure preserving action $G\act(X,\mathcal{B},\mu)$ and given $\eps>0$, fix $k>10/\eps$. By the amenability of $G$, there exists a F\o lner set $H$ that is $(T_iT_i^{-1}T_jT_j^{-1},1/(10k^2))$-invariant\footnote{The notion of $(K,\eps)$-invariant in the sense of Ornstein-Weiss is different, but when $K$ contains identity, it coincides with ours up to multiplication by the size of the generating set.}. Then there exists center subsets $C_1,\dots, C_n \subset H$ such that for each $1\leq i\leq n$, $C_i$ is a $T_i$-set, and $T_1C_1,\dots,T_nC_n$ cover the entire $T_iT_i^{-1}T_jT_j^{-1}$-interior of $H$ for each $i.j$.
    (For brevity, we just say "interior" in the rest of the proof).

    According to the lemma 3 \& 4 of Section 2.2 from Ornstein-Weiss \cite{bams/1183545203}, there exists a partition of $X$ into $H$-sets $X=\sqcup_{i=1}^\infty U_i$ and for some number $N$ we have
    \begin{equation}
        \mu(\cup_{i=1}^NHU_i)>1-1/k
    \end{equation}
    \begin{equation}\label{totallost}
        \sum_{i=1}^N \mu(HU_i)<k
    \end{equation}
\end{proof}
Now consider the following $T_j$-sets. For $1\leq j\leq n$, set $W_{j1}=C_jU_1$, and set $W_1=\cup_{j=1}^nT_jW_{j1}$. Note that $W_1$ covers the entire interior of $HU_1$. Now consider $HU_2$ and set 
\begin{equation}
    W'_{j1}=\{x\in C_jU_1\mid T_jx\cap(\cup_{i=1}^nT_iC_i)U_2=\varnothing\}
\end{equation}
for $1\leq j\leq n$. Then let $W_{j2}=W'_{j1}\cup C_jU_2$.
Clearly, $W_{j2}$ is a $T_j$-set and $\sqcup_{j=1}^nT_jW_{j2}$ covers all interior of $HU_2$.\\
Here we show $\sqcup_{j=1}^nT_jW_{j2}$ also covers the interior of $HU_1$. Suppose $y\in(\cup_{j=1}^nT_jC_j)U_1\setminus \cup_{j=1}^n T_jW_{j2}$. Then $y\notin\cup_{j=1}^nT_jC_jU_2$ and $y\notin\cup_{j=1}^nT_jW'_{j1}$. This implies that for any $1\leq j\leq n$, $y\notin T_jW'_{j1}$ and $T^{-1}_jy\notin W'_{j1}$. Hence $T_jT^{-1}_jy\notin T_jC_jU_2$, while $y\notin\cup_{j=1}^nT_jC_jU_2$. So $y$ is in the outer $T_jT^{-1}_j$-boundary of $T_iC_iU_2$, and hence still in the $T_iT_i^{-1}T_jT_j^{-1}$-boundary of $HU_2$. Therefore we proved that $\sqcup_{j=1}^nT_jW_{j2}$ covers the interior of $HU_1$.\\
By induction, for all $1\leq k < N$, we can define 
\begin{equation}
W'_{jk}=\{x\in W_{jk}\mid T_jx\cap (\cup_{i=1}^nT_iC_i)U_k=\varnothing\}
\end{equation}
and 
\begin{equation}
    W_{j,k+1}=W'_{jk}\cup C_jU_{k+1}
\end{equation}
Then just like the case of $k=1$ above, we can also show that $W_{j,k+1}$ is a $T_j$-set and $\cup_{j=1}^nT_jW_{j,k+1}$ covers $\cup_{j=1}^n(T_jC_jU_{k+1})$ and is contained in the $T_iT_i^{-1}T_jT_j^{-1}$-boundary of $HU_k$. It also covers  $\cup_{j=1}^n(T_jC_jU_t)$ for $1\leq t\leq k$ and is contained in boundary of $HU_{t+1}$. This induction ends on $k=N$. Therefore, $\sqcup_{j=1}^nT_jW_{jN}$ covers the entire interior of $HU_i$ for all $1\leq i\leq N$ while losing some points on the boundary of $HU_i$. Therefore, the induction tell us that the total lost is contained in the boundary of each $HU_i$ for $1\leq i\leq N$, which is estimated by (\ref{totallost}):
\begin{equation}
    \frac{1}{10k^2}\sum_{i=1}^N\mu(HU_i)\leq \frac{1}{10k}<\eps
\end{equation}
Thus all the $tW_{jN}$ for $t\in  T_j$ and $1\leq j\leq n$ are pairwise disjoint with
\begin{equation}
    \mu(\sqcup_{i=1}^n T_iW_{iN})\geq 1-\eps.
\end{equation}

This theorem above will helps us to prove that the isoperimetric profile of an action of amenable group is bounded by the isoperimetric ratio of the multitilings (Theorem \ref{cor2}), which is the main part of the Theorem \ref{folnertilings}. The other part of the Theorem \ref{folnertilings} is actually the corollary \ref{cor}.

\section{Subequivalence relations and the isoperimetric profiles of group actions}\label{newdefn}
Now assume the couple $(G,S)$ is a finitely generated group with a finite symmetric generating set $S$. Suppose the group $G$ acts on a probability measure space $(X,\mu)$. 
\begin{defn}\label{folner}
Let $\theta$ be an equivalence relation of $X$. A $subequivalence$ $relation$ $\eta \subset \theta$ is a subset of $\theta$ which is also an equivalence relation on $X$.
\end{defn}
Let $\theta$ be the orbit equivalence relation of the action $G\act X$. We say a subequivalence relation $\eta\subset \theta$ is \emph{connected} if for any pair $(a,b)\in \eta$, there exists $s_1,...,s_k \in S$ such that $b=s_ks_{k-1}\dots s_1a$ and $(a,s_1a),\ (s_1a,s_2s_1a),...,\ (s_k^{-1}b,b)\in\eta$,
Intuitively, a subequivalence relation of an orbit equivalence relation $\theta$ is connected if by applying generators, one can move from any point to any other point while staying inside the equivalence class. In the view of graph, we can also say for each $x\in X$, $[x]_\theta$ can be regarded as a connected subgraph of the Cayley graph of $Gx$.
In this paper, we assume that all the subequivalence relation of the orbit equivalence relation is measurable.\\
\begin{defn}
Let $A$ be a measurable subset of $X$. Then $\theta\cap(A\times A)$ is an equivalence relation on $A$. We define the $restriction$ $of$  $\theta$ $restricted$ $to$ $A$ to be the largest connected subequivalence relation of $\theta\cap (A\times A)\subset A\times A$.
We denote the restriction of $\theta$ of $\theta$ to $A$ by $\theta||_A$.
\end{defn}
In other words, the equivalence classes of $\theta||_A$ are exactly all the connected components of the equivalence classes of $\theta\cap(A\times A)$.\\

\begin{defn}\label{folneraction}
Fix a finite symmetric generating set $S$ of $G$ and probability measure space $(X,\mu)$. Then the \emph{isoperimetric profile of an action} $(G,S)\act X$ is defined as
\[
\mathcal{I}_{(G,S)\act X}(n)\defeq \inf_{\substack{\theta' \subseteq \theta \\|\theta'|\leq n}}
\mu(\partial \theta')=\inf_{\substack{\theta' \subset \theta\\|\theta'|\leq n}}\mu\{x\in X \mid sx \notin [x]_{\theta'} \text{, for some } s\in S\}
\]
where $\theta'$ is a measured subequivalence relation of the orbit equivalence relation $\theta$ with at most $n$ elements in equivalence classes. $|\theta'|$ is the maximal size of the equivalence classes. We will also write $\mathcal{I}_{G\act X}(n)$ if there is no confusion about the generating set.
\end{defn}
In this paper, when talking about the subequivalence relation of the orbit equivalence relation $\theta$, we use notation $\theta_n$ to represent the subequivalence relation with classes size at most $n$

\noindent\textbf{Remark.} Actually in Definition \ref{folneraction}, we need only consider the connected subequivalence relations in the infimum. We will prove this in Section \ref{special}.\\
\textbf{Remark.} The boundary set $\partial\theta_n=\{x\in X\mid sx\notin[x]_{\theta_n}, \text{ for some }s \in S\}$ above is measure in $X$. To see this, we can actually rewrite it as 
\begin{equation}
    \partial\theta_n=\bigcup_{s\in S}\text{Graph(s)}^{-1}(X\times X\setminus\theta_n)
\end{equation}
where $\text{Graph(s)}:X\to X\times X$ is the graph of the function $x \mapsto sx$. So $\partial \theta_n$ is measurable. Moreover, we can naturally define the concept of the boundary in a $G$-space $X$, directly from the boundary concept in $G$. The next proposition tell us that the isoperimetric profile of an action will be nontrivial as long as the action itself is slightly stronger than non-singular. A special case was studied by Sayang and Shalom \cite{ultralimits}, where they called these actions ``bounded quasi-invariant''. Here I generalize this notion as the following.

\begin{defn}
    Consider the nonsingular group action $(G,S)\act X$ where $S$ is the symmetric generating set. We say this action is $p$\emph{-quasi-invariant} if the Radon-Nikodym derivatives $ds_*\mu/d\mu(x)$ are $L^p$ for each $s\in S$. Throughout the paper, we simply say \emph{``$p$-QI"}. When we say \emph{``$(1+\delta)$-QI"}, we always assume $\delta>0$. When we say ``BQI"(bounded QI), we mean $p=\infty$.
\end{defn}

\begin{prop}
    Assume the finitely generated group $G$ acts on standard probability space $(X,\mu)$ and the action is $(1+\delta)$-QI. Then $\mathcal{I}_{(G,S)\act X}(n)>0$.
\end{prop}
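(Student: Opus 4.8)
The plan is to argue by contradiction, exploiting two facts: a finite equivalence class sitting inside an infinite orbit must have nonempty boundary, and the $(1+\delta)$-QI hypothesis prevents that boundary from being pushed onto a null set under translation. Fix $n$ and suppose instead that $\mathcal{I}_{(G,S)\act X}(n)=0$, so there is a sequence of subequivalence relations $\theta^{(k)}\subset\theta$ with $|\theta^{(k)}|\le n$ and $\mu(\partial\theta^{(k)})\to 0$. Writing $U_k=\{x\in X\mid sx\in[x]_{\theta^{(k)}}\text{ for all }s\in S\}$ for the interior, we have $X\setminus U_k\subseteq\partial\theta^{(k)}$, hence $\mu(U_k)\to 1$. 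The real content is to upgrade this ``one-step'' interiority into ``$n$-step'' interiority and then contradict the class-size bound.

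First I would control translates of the bad set. For a fixed $g\in G$ of word length at most $n$, write $g=s_m\cdots s_1$ with $s_i\in S$ and $m\le n$. I bound $\mu(g^{-1}(X\setminus U_k))$ by applying the Radon-Nikodym change of variables one generator at a time: since $S$ is symmetric, each $s_i^{-1}\in S$ has $\tfrac{\d(s_i^{-1})_*\mu}{\d\mu}\in L^{1+\delta}$, so Hölder's inequality gives $\mu(s^{-1}A)\le \bigl\|\tfrac{\d s^{-1}_*\mu}{\d\mu}\bigr\|_{1+\delta}\,\mu(A)^{\delta/(1+\delta)}$ for any measurable $A$. Iterating this $m\le n$ times yields $\mu(g^{-1}(X\setminus U_k))\le C_g\,\mu(X\setminus U_k)^{(\delta/(1+\delta))^{n}}\to 0$. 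Since $G$ is finitely generated there are only finitely many $g$ in the ball of radius $n$, so the set $V_k=\bigcap_{|g|\le n}g^{-1}U_k$ satisfies $\mu(V_k)\to 1$; in particular $V_k$ is nonempty for large $k$, and we may choose $x\in V_k$ lying in the conull set where the action is free.

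Next I produce an oversized class. For such an $x$ I claim $gx\in[x]_{\theta^{(k)}}$ for every $g$ with $|g|\le n$, by induction on word length: if $hx\in[x]_{\theta^{(k)}}$ with $|h|\le n-1$, then $hx\in U_k$ (as $x\in V_k$), so $s(hx)\in[hx]_{\theta^{(k)}}=[x]_{\theta^{(k)}}$ for every $s\in S$. Because the action is essentially free, the orbit map $g\mapsto gx$ is injective on this conull set, so $\{gx:|g|\le n\}$ has exactly as many elements as the ball of radius $n$ in $G$; and since $G$ is infinite this ball has at least $n+1$ elements. Hence $|[x]_{\theta^{(k)}}|\ge n+1$, contradicting $|\theta^{(k)}|\le n$ and completing the proof.

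The crux, and the step I expect to be the main obstacle, is the translate estimate converting $\mu(U_k)\to 1$ into $\mu(V_k)\to 1$: this is exactly where the $(1+\delta)$-QI hypothesis is consumed, since the iterated Hölder bound needs the generator Radon-Nikodym derivatives in $L^{1+\delta}$ (bare nonsingularity would still give qualitative absolute continuity of each fixed translate, which also suffices but less cleanly). I would also emphasize that a freeness-type hypothesis is genuinely necessary here: for the trivial action every $\theta_n$ has empty boundary, so $\mathcal{I}\equiv 0$ despite the action being even BQI. Thus essential freeness together with $G$ infinite---guaranteeing that balls, hence orbits, are infinite---is what the argument truly relies on, and I would state these as the standing assumptions inherited from the rest of the paper; the remaining details are routine bookkeeping.
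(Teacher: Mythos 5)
Your argument is essentially the paper's own proof: both proceed by contradiction, use the H\"older/Radon--Nikodym estimate (the paper's Lemma on translates of small sets) to show that $\mu(S^n\partial\theta^{(k)})\to 0$, and then observe that a class of size at most $n$ inside an infinite free orbit forces every point to lie within $n$ generator-steps of the boundary, so that set must have full measure. Your explicit remark that essential freeness and $|G|=\infty$ are genuinely needed is correct --- the paper's statement of the proposition omits them but its argument (via $\mu(S^n\partial\theta_i)=\mu(X)=1$) silently uses both.
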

\begin{proof}
    For some $n>0$, we have that $\mathcal{I}_{(G,S)\act X}(n)=0$ if and only if there exists a sequence of subequivalence relations $(\theta_i)$ of the orbit equivalence relation such that $\mu(\partial\theta_i)\to 0$ when $|\theta_i|\leq n$. Lemma $\ref{weakct}$ below implies that $\mu(S^n\partial\theta_i)$ is bounded by a polynomial in $\mu(\partial \theta_i)$. Therefore, $\mu(S^n\partial\theta_i) \to 0$ as well. But $|[x]_{\theta_i}|\leq n $ implies $[x]_{\theta_i}\subset S^nx$. Therefore, $\mu(S^n\partial\theta_i)=\mu(X)=1$, which is a contradiction.
\end{proof}

Unlike the usual isoperimetric profile for groups $G$, the isoperimetric profile of an action is not independent of the generating set in general. This is because in the isoperimetric profiles for groups, we use the counting measure and the action of $G$ on itself by translation is measure-preserving. If the action is BQI , then the rate of change is independent of the generating sets up to a multiplicative constant, i.e., if $S_1$ and $S_2$ are symmetric finite generating sets and $\frac{\d s_*\mu}{\d\mu}$ is $L^\infty$-bounded for each $s\in S_1,S_2$, then there exists a constant $C>0$ such that for every $n\geq 1$,
\[
\frac{1}{C}\mathcal{I}_{(G,S_1)\act X}(n)\leq\mathcal{I}_{(G,S_2)\act X}(n)\leq C \mathcal{I}_{(G,S_1)\act X}(n)
\]
\indent In general, if the Radon-Nikodym derivatives are $L^p$, $p>1$, then $\mathcal{I}_{G\act X}$ is disturbing when changing the different symmetric generating set but the extent of such disturbance depends on $p$. We say two real number $p,q\in [1,\infty]$ are \emph{dual} to each other if $1/p+1/q=1$.
\begin{prop}\label{generatingset}
Assume that the finitely generated group $G$ acts on the probability measure space $(X,\mu)$ and the action is essentially free. Let $S_1, S_2$ be any two finite symmetric generating sets. If $(G,S_1\cup S_2)\act X$ is p-QI, then there exists a constant $C$ such that 
\[
\mathcal{I}_{(G,S_2)\act X}(n) \leq C(\mathcal{I}_{(G,S_1)\act X}(n))^{1/q}, \quad \text{ where } q \text{ is dual to }p
\]
In particular, if $(G,S_1\cup S_2)\act X$ is BQI, then there exist constants $C_1, C_2>0$ such that
\[
C_1(\mathcal{I}_{(G,S_2)\act X}(n))\leq \mathcal{I}_{(G,S_1)\act X}(n) \leq C_2(\mathcal{I}_{(G,S_2)\act X}(n))
\]
\end{prop}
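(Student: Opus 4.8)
The plan is to fix an arbitrary subequivalence relation $\theta'\subseteq\theta$ with $|\theta'|\le n$ and compare $\mu(\partial\theta')$ computed with respect to $S_2$ against the one computed with respect to $S_1$ \emph{for this same} $\theta'$; since the class-size $|\theta'|$ does not depend on the generating set, the same $\theta'$ is admissible for both profiles (we work with the unrestricted infimum over all subequivalence relations, so no compatibility of ``connectedness'' is needed), and taking the infimum at the very end yields the stated inequality. Write $\partial_{S_1}\theta'$ and $\partial_{S_2}\theta'$ for the two boundaries, let $L$ be the maximal $S_1$-word length needed to express the elements of $S_2$, and let $B_L=\{g\in G:\ |g|_{S_1}\le L\}$. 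The first step is the combinatorial containment
\[
\partial_{S_2}\theta'\ \subseteq\ B_L\cdot\partial_{S_1}\theta'.
\]
To see this, take $x\in\partial_{S_2}\theta'$, so $s_2x\notin[x]_{\theta'}$ for some $s_2\in S_2$; writing $s_2=t_L\cdots t_1$ with $t_i\in S_1$ and following the path $x,\,t_1x,\,\dots,\,t_L\cdots t_1x$, let $j$ be the largest index with $g_jx\in[x]_{\theta'}$ where $g_j=t_j\cdots t_1$. Then $j\le L-1$ and $t_{j+1}(g_jx)\notin[g_jx]_{\theta'}$, i.e. $g_jx\in\partial_{S_1}\theta'$ with $|g_j|_{S_1}\le L$, so $x\in g_j^{-1}\partial_{S_1}\theta'\subseteq B_L\cdot\partial_{S_1}\theta'$ (using that $S_1$ is symmetric). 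Essential freeness enters here to identify a.e.\ orbit faithfully with $G$, so that these word manipulations in the orbit are the same as in the group.

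Next I would pass to measures. By the change-of-variables formula for the nonsingular action, for measurable $A$ and any $g\in G$,
\[
\mu(gA)=\int_A \frac{\d (g^{-1})_*\mu}{\d\mu}\,\d\mu\ \le\ \norm{\frac{\d (g^{-1})_*\mu}{\d\mu}}_{p}\,\mu(A)^{1/q},
\]
the inequality being H\"older with $\mathbf 1_A\in L^q$ and $\norm{\mathbf 1_A}_q=\mu(A)^{1/q}$. Applying this with $A=\partial_{S_1}\theta'$ and summing the containment over $g\in B_L$ gives
\[
\mu(\partial_{S_2}\theta')\ \le\ \Big(\sum_{g\in B_L}\norm{\tfrac{\d (g^{-1})_*\mu}{\d\mu}}_{p}\Big)\,\mu(\partial_{S_1}\theta')^{1/q}\ \eqdef\ C\,\mu(\partial_{S_1}\theta')^{1/q}.
\]
As $C$ is independent of $\theta'$ and $t\mapsto t^{1/q}$ is increasing, taking the infimum over all $\theta'$ with $|\theta'|\le n$ yields $\mathcal I_{(G,S_2)\act X}(n)\le C\,\mathcal I_{(G,S_1)\act X}(n)^{1/q}$, which is the first assertion.

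For the BQI case one has $p=\infty$ and $q=1$. Here every $\d(g^{-1})_*\mu/\d\mu$ with $g\in B_L$ is bounded, being a finite product (via the Radon--Nikodym cocycle identity) of bounded generator derivatives composed with translates, so $C<\infty$ and the exponent is $1/q=1$; thus $\mathcal I_{(G,S_2)\act X}(n)\le C\,\mathcal I_{(G,S_1)\act X}(n)$. Swapping the roles of $S_1$ and $S_2$ gives the reverse one-sided bound, and the two together produce the two-sided estimate with suitable $C_1,C_2>0$.

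The step I expect to be the main obstacle is guaranteeing $C<\infty$ in the general $p$-QI case, that is, that $\d(g^{-1})_*\mu/\d\mu\in L^p$ for every $g\in B_L$. The cocycle identity expresses this derivative as a product of at most $L$ generator derivatives (each composed with a group translate), and a product of $L^p$ functions need not be $L^p$; controlling these word-length norms, rather than only the generator norms, is precisely where the hypothesis that $(G,S_1\cup S_2)$ is $p$-QI, together with the measure-distortion estimates used elsewhere in the paper (cf.\ Lemma \ref{weakct}), must be invoked to absorb everything into the single constant $C$ while keeping the exponent $1/q$. For BQI this difficulty disappears entirely. A secondary, more routine point is the careful bookkeeping in the containment, in particular that the last-crossing index argument supplies a point of $\partial_{S_1}\theta'$ within distance $L$ of each point of $\partial_{S_2}\theta'$.
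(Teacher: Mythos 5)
Your strategy is the same as the paper's: fix $\theta'$, establish the containment $\partial_{S_2}\theta'\subseteq S_1^{k-1}\partial_{S_1}\theta'$ (the paper states this in one line; your last-crossing-index argument is the careful justification of it), then push the measure estimate through with H\"older and take the infimum at the end. Your BQI case is complete and matches the paper's, since there the one-step bound $\mu(sA)\leq M\mu(A)$ composes without loss.

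The obstacle you flag in the general $p$-QI case is a genuine gap, and you should know that the paper's own proof does not resolve it either: it simply asserts $\mu(S_1^{k-1}\partial_{S_1}\theta_n)\leq M_p^{k-1}(\mu(\partial_{S_1}\theta_n))^{1/q}$. The difficulty is exactly as you describe. If one iterates the one-step estimate $\mu(sA)\leq M_p\,\mu(A)^{1/q}$ along a word of length $k-1$, the exponent degrades at each step: $\mu(s_2s_1A)\leq M_p\,\mu(s_1A)^{1/q}\leq M_p^{1+1/q}\mu(A)^{1/q^2}$, and after $k-1$ steps one gets exponent $1/q^{k-1}$, not $1/q$. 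Since $\mu$ is a probability measure the boundary measures are at most $1$, so $\mu(A)^{1/q^{k-1}}\geq\mu(A)^{1/q}$ and the iterated bound is strictly weaker than the claimed one. The alternative route via the cocycle identity, $\frac{\d(gh)_*\mu}{\d\mu}=\big(\frac{\d h_*\mu}{\d\mu}\circ g^{-1}\big)\cdot\frac{\d g_*\mu}{\d\mu}$, runs into the problem you name: a product of $k-1$ functions each in $L^p$ need only lie in $L^{p/(k-1)}$, which for $p$ close to $1$ is useless for H\"older against $\mathbf{1}_A$. So to salvage the statement one must either (a) weaken the conclusion to $\mathcal{I}_{(G,S_2)\act X}(n)\leq C(\mathcal{I}_{(G,S_1)\act X}(n))^{1/q^{k-1}}$ (which still gives a polynomial bound and suffices for every qualitative use of the proposition in the paper, e.g.\ in Lemma~\ref{weakct} and in the proof of Theorem~\ref{iff}), or (b) strengthen the hypothesis to $L^p$-integrability of $\d g_*\mu/\d\mu$ for all $g$ in the $S_1$-ball of radius $k-1$, which is what your constant $C=\sum_{g\in B_L}\norm{\d(g^{-1})_*\mu/\d\mu}_p$ implicitly assumes. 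Your writeup is honest about this, but as it stands neither your argument nor the paper's proves the inequality with the exponent $1/q$ as stated.
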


In Definition \ref{folneraction}, we regard the boundary of a set as inner boundary. Alternatively, we could also use the outer boundary to define the isoperimetric profile of an action. In this case, all the theorems and properties arises in this paper remains same. Moreover, say $\mathcal{I}_I(n)$ and $\mathcal{I}_O(n)$ to be the isoperimetric profile of same action defined according to inner and outer boundary repectively. Then using proposition \ref{generatingset}, it is easy to show that if the action is BQI then they have the same rate of decay up to a multiplicative constant; if the action is p-QI, then they have the same rate of decay up to $q^{th}$ power, where $q$ is dual to $p$.  


\section{Proof of main theorems}\label{proveiff}

\indent Before proving Theorem \ref{iff}, we first introduce a useful lemma by Fraczyk and Van Limbeek, which says that an amenable equivalence relation is always finite on an arbitrarily large subset.

\begin{lem}\label{lem3.7}(Fraczyk-Van Limbeek \cite{https://doi.org/10.48550/arxiv.1905.13584})
Let $(X,\mu, (\varphi_i)_{i\in I})$ be a finite symmetric graphing generating a nonsingular amenable measured equivalence relation $\mathcal{R}$. Then for every $\eps>0$ there exists $M \in \N$ and a subset $Z\subset X$ such that $\mu(Z)\geq 1-\eps$ and the equivalence relation $\mathcal{E}$ on $Z$ generated by the restrictions of $(\varphi_i)_{i\in I}$ satisfies $|[z]_{\mathcal{E}}|\leq M$ for $\mu$-a.e. $z\in Z$.
\end{lem}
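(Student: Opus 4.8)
The plan is to reduce everything to the Connes--Feldman--Weiss theorem \cite{connes_feldman_weiss_1981}: since $\mathcal{R}$ is amenable it is hyperfinite, so I may write $\mathcal{R}=\bigcup_n \mathcal{R}_n$ as an increasing union of finite (finite-class) measurable subequivalence relations. The two features of a single fixed $\mathcal{R}_n$ that I will exploit are that its equivalence classes are finite $\mu$-a.e., and that the graphing maps are \emph{eventually captured} by $\mathcal{R}_n$. The idea is to pick one index $n_0$ so that the $\varphi_i$ almost never leave $\mathcal{R}_{n_0}$-classes, and then, for that now-fixed $n_0$, to delete the small set where the $\mathcal{R}_{n_0}$-classes are too large.

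First I would control the generators. For each $i\in I$ with domain $X_i$, set $A_{i,n}=\{x\in X_i : (x,\varphi_i(x))\in\mathcal{R}_n\}$. Because $(\varphi_i)_{i\in I}$ generates $\mathcal{R}$, the graph of each $\varphi_i$ lies in $\mathcal{R}=\bigcup_n\mathcal{R}_n$, and since the $\mathcal{R}_n$ increase we have $A_{i,n}\uparrow X_i$ modulo null sets. As $I$ is finite, I can fix a single $n_0$ with $\sum_{i\in I}\mu(X_i\setminus A_{i,n_0})<\eps/2$, and put $B_1=\bigcup_{i\in I}(X_i\setminus A_{i,n_0})$.

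Next I would truncate the class sizes of the frozen relation $\mathcal{R}_{n_0}$. The map $f(x)=\lvert [x]_{\mathcal{R}_{n_0}}\rvert$ is measurable (e.g.\ via a Feldman--Moore generating group, each $\{f\geq k\}$ is measurable) and finite $\mu$-a.e., so $\mu(\{f>M\})\to0$ as $M\to\infty$; I pick $M$ with $\mu(\{f>M\})<\eps/2$ and set $B_2=\{f>M\}$. Finally I take $Z=X\setminus(B_1\cup B_2)$, so that $\mu(Z)\geq 1-\eps$. On $Z$ every edge of the restricted graphing stays inside $\mathcal{R}_{n_0}$: if $z\in Z\cap X_i$ then $z\in A_{i,n_0}$, hence $(z,\varphi_i(z))\in\mathcal{R}_{n_0}$. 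Therefore the equivalence relation $\mathcal{E}$ generated on $Z$ satisfies $\mathcal{E}\subseteq \mathcal{R}_{n_0}\cap(Z\times Z)$, and since each $z\in Z$ has $\lvert[z]_{\mathcal{R}_{n_0}}\rvert\leq M$, we conclude $\lvert[z]_{\mathcal{E}}\rvert\leq M$ for $\mu$-a.e.\ $z\in Z$, as required.

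The main obstacle is obtaining the \emph{uniform} bound $M$, rather than mere a.e.\ finiteness. Hyperfiniteness by itself only yields finite classes, whose sizes may be unbounded across $X$; the crucial point is that once $n_0$ is frozen, the single function $f$ carries no mass at infinity, so the truncation costs only arbitrarily small measure. It is essential that the two reductions are carried out in this order --- first fix $n_0$ for the edge-capture, then truncate $f$ for that same $n_0$ --- so that no circular dependency arises between the choice of index and the bound on the class sizes.
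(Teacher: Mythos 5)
The paper does not prove this lemma; it is quoted directly from Fraczyk--Van Limbeek, so there is no in-paper argument to compare against. Your proof is correct and is the natural derivation from Connes--Feldman--Weiss: passing to a hyperfinite exhaustion $\mathcal{R}=\bigcup_n\mathcal{R}_n$, using finiteness of $I$ to freeze a single $n_0$ capturing almost all generator edges, and then truncating the a.e.\ finite class-size function of $\mathcal{R}_{n_0}$ is exactly the right order of quantifiers, and the containment $\mathcal{E}\subseteq\mathcal{R}_{n_0}\cap(Z\times Z)$ follows since the latter is itself an equivalence relation on $Z$ containing every generating edge of the restricted graphing. The only points worth making explicit in a final write-up are the routine measurability of $A_{i,n}$ (as the preimage of $\mathcal{R}_n$ under $x\mapsto(x,\varphi_i(x))$) and of the class-size function, both of which you already flag.
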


\begin{lem}\label{weakct}
Let $G$ be a finitely generated group acting on the probability measure space $(X,\mu)$. Let $S$ be a finite symmetric generating set of $G$. Assume that $(G,S_1\cup S_2)\act X$ is p-QI, where $p=1+\delta,$ where $ \delta>0$. Then for any sequence of measurable subsets $\{A_n\}_{n\in\N}$ with $\mu(A_n)\rightarrow 0$, we have $\mu(sA_n)\rightarrow 0$ for any $s\in S$.
\end{lem}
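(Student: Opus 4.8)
The plan is to rewrite $\mu(sA_n)$ as the integral of a Radon--Nikodym derivative against the indicator of $A_n$, and then control that integral by H\"older's inequality. This not only yields the qualitative conclusion $\mu(sA_n)\to 0$, but in fact produces the quantitative polynomial bound that is used in the proof of the preceding proposition.

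First I would perform the measure-theoretic change of variables. Since the action is nonsingular, the convention $g_*\mu(A)=\mu(g^{-1}A)$ gives, upon taking $g=s^{-1}$,
\[
\mu(sA_n)=\mu\bigl((s^{-1})^{-1}A_n\bigr)=(s^{-1})_*\mu(A_n)=\int_{A_n}\frac{\d(s^{-1})_*\mu}{\d\mu}\,\d\mu .
\]
The crucial point here is that, because $S$ is symmetric, $s^{-1}\in S$, so the $p$-QI hypothesis applies to $s^{-1}$ and the density $f\defeq\frac{\d(s^{-1})_*\mu}{\d\mu}$ lies in $L^p(X,\mu)$ with $p=1+\delta$.

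Next I would apply H\"older's inequality. Writing $q$ for the dual exponent of $p$ (so $1/p+1/q=1$ and, crucially, $q<\infty$ since $\delta>0$), and noting $f\geq 0$, we obtain
\[
\mu(sA_n)=\int_X f\cdot\mathbf{1}_{A_n}\,\d\mu\leq \norm{f}_p\,\norm{\mathbf{1}_{A_n}}_q=\norm{f}_p\,\mu(A_n)^{1/q}.
\]
Since $\norm{f}_p$ is a finite constant depending only on $s$, and $\mu(A_n)^{1/q}\to 0$ as $\mu(A_n)\to 0$, we conclude $\mu(sA_n)\to 0$. The same estimate furnishes the quantitative bound $\mu(sA_n)\leq C\,\mu(A_n)^{\delta/(1+\delta)}$, which upon iterating over the finitely many words of length $\leq n$ in $S$ gives the bound on $\mu(S^n\partial\theta_i)$ invoked earlier.

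There is no serious obstacle here; the only points requiring care are the bookkeeping of the change of variables (keeping the convention $g_*\mu(A)=\mu(g^{-1}A)$ consistent and invoking the symmetry of $S$ so that the relevant density lands in $L^p$), and the observation that the hypothesis $\delta>0$ is exactly what makes $q$ finite, so that $\mu(A_n)^{1/q}\to 0$. For $\delta=0$ one would instead have to appeal to the absolute continuity of the finite measure $A\mapsto\int_A f\,\d\mu$ with respect to $\mu$, which still yields the qualitative statement but not the quantitative rate.
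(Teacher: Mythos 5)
Your proposal is correct and follows essentially the same route as the paper: rewrite $\mu(sA_n)$ as the integral of a Radon--Nikodym derivative over $A_n$ and apply H\"older's inequality with the dual exponent $q<\infty$. Your treatment is in fact slightly more careful than the paper's, since you track the convention $g_*\mu(A)=\mu(g^{-1}A)$ and invoke the symmetry of $S$ to place the relevant density in $L^p$, whereas the paper elides this bookkeeping.
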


\begin{proof}
Suppose that $\frac{\d s_* \mu}{\d \mu}$ is $L^{1+\delta}$, so we have $\mu(sA_n)=\int_{A_n}\frac{\d s_*\mu}{\mu}\d \mu$. Then by H$\ddot{\text{o}}$lder's inequality, for $1\leq q <\infty$ and $p$ such that $1/p+1/q=1$, we have
\begin{align}
    \mu(sA_n)=\int_{A_n}\frac{\d s_*\mu}{\d\mu}\d \mu\leq &\norm{\frac{\d s_*\mu}{\d\mu}}_p\norm{1_{A_n}}_q\label{holder1}\\
    =&\norm{\frac{\d s_*\mu}{\d\mu}}_p(\mu(A_n))^{1/q},\quad \label{holder2}
\end{align}
Using $p=1+\delta$ shows $\mu(sA_n)\to 0$. 
\end{proof}

Now we are ready to prove our main theorems.
\begin{proof}[Proof of Theorem \ref{iff} ($\Rightarrow$)]
Consider a non-singular amenable $G$-space $(X,\mu)$ with finite symmetric generating set $S$. This induces a graphing $(X, \mu, (\varphi_s)_{s\in S})$ where the maps $(\varphi_s)_{s\in S}$ are the translations of $X$ by $s\in S$. By Theorem \ref{essfree}, we know that the equivalence relation generated by the graphing is amenable. Let $n\geq 1$. Lemma \ref{lem3.7} tells us that for any $\eps>0$, there exists a subset $Z\subset X$ such that $\mu(Z)>1-\eps$ and the restriction of the orbit equivalence classes $\theta$ on $Z$ has size at most $n$, i.e.,$\left|[x]_{\theta|_Z}\right|\leq n$.  Now define the new equivalence relation $\theta_n\subset \theta$ as follows,
\begin{equation}
     [x]_{\theta_n}=\begin{cases}
    & [x]_{\theta||_{Z}},\quad \text{ if } x \in Z.\\
    & \{x\}, \quad \text{ if } x \notin Z.
    \end{cases}
\end{equation}
Recall that $\theta||_Z$ is the connected subequivalence relation of $\theta$ restricted on $Z$. For any distinct two equivalence classes $[x]_{\theta_n},[y]_{\theta_n}\in X/\theta_n$ where $x,y\in Z$, $\partial_{out}[x]_{\theta_n}$ and $[y]_{\theta_n}$ are disjoint. Therefore, we have that 
\begin{equation}
    \bigcup_{[x]_{\theta_n}\subset Z}\partial[x]_{\theta_n}=\partial Z.
\end{equation}
Now we have the following computation:
\begin{align}
    \mu(\partial\theta_n)=\mu(\bigcup_{[x]_{\theta_n}\in X/\theta_n}\partial [x]_{\theta_n})=&\mu(\bigcup_{[x]_{\theta_n}\subset Z}\partial [x]_{\theta_n}\cup\bigcup_{[x]_{\theta_n}\subset Z^c}\partial [x]_{\theta_n})\\
    =&\mu(\partial Z)+\mu(Z^c)\\
    =&\mu(\partial_{out}Z^c)+\mu(Z^c)\\
    \leq &\mu(SZ^c)+\mu(Z^c)\\
    \leq &|S|\mu(\partial Z^c)^{1/q}+\mu(Z^c)\label{bound}\\
    \leq |S|\eps^{1/q}+\eps
\end{align}
where (\ref{bound}) comes from (\ref{holder2}). In the proof of Lemma \ref{weakct}. Lemma $\ref{lem3.7}$ also tells us that $n\to \infty \implies \eps\to 0$, which finish the last step of the proof of one direction.
\end{proof}

\begin{proof}[Proof of Theorem \ref{iff} ($\Leftarrow$)]
Let $\theta$ be the orbit equivalence relation. Choose an enumeration of the elements of the group $G=\{g_0,g_1,g_2,...\}$, which is increasing with respect to the word metric. Since $G\act X$ is essentially free, such ordering is fixed for a.e. $x\in X$.  In particular, $\norm{g_t}\leq t$. For $n\in\N$ and the finite subequivalence relation $\theta_n$ of the orbit equivalence relation, there exists a sequence $\{k_n\}_{n\in\N}$ such that $[x]_{\theta_n}\subset B(x,n)=\{g_ix\}_{i=0}^{k_n}$ for any $x \in X$. Now for $k<n$ define the k$^{th}$ boundary of $\theta_n$:
\[
\partial^k\theta_n\defeq\{ x\in X \mid \exists g \in G, \norm{g}\leq k \text{ with } gx\notin [x]_{\theta_n} 
\}
\]
By Lemma \ref{weakct}, we have that
\begin{equation}\label{kthbdry}
\mu(\partial^k\theta_n)\leq \mu(\partial\theta_n)+\sum_{s\in S}\mu(s\partial\theta_n)+\dots +\sum_{s_1,\dots, s_k\in S}\mu(s_1...s_k\partial\theta_n)\xrightarrow{n\to \infty} 0
\end{equation}
For each $n\in \N$, define $\alpha^\theta_n:X\to \N$ by $\alpha^\theta_n(x)=\min\{t\mid g_tx\notin[x]_{\theta_n}\}$. Then $\alpha^\theta_n(x)\leq k$ implies that $\norm{g_{\alpha^\theta_n(x)}}\leq k$, hence $x\in \partial^k\theta_n$. Combined with (\ref{kthbdry}), for any $\eps,k>0$, for every large enough $n$ we have $\mu\{x\mid \alpha^\theta_n(x)\leq k\}\leq \mu(\partial^{|S|k}\sigma_n)<\eps$, i.e., $\alpha^\theta_n \xrightarrow{n\to \infty} \infty$ in measure.\\

Moreover, there exists $m>n$ such that $\mu\{x\mid \alpha^\theta_m(x)\leq k_n\}\leq \eps$. For any $g_ix\in[x]_{\theta_n}$, we have $i\leq k_n$. Therefore $\alpha^\theta_m(x)\geq k_n $ implies $ [x]_{\theta_n}\subsetneq [x]_{\theta_m}$. Therefore $\mu\{x\mid [x]_{\theta_n}\not\subseteq [x]_{\theta_m}\}<\eps$.\\
Now by induction and say $\eps_0=\eps$, we have a sequence of equivalence relations $\{\sigma_i\}_{i\in\N}$ such that if $E_i=\{x\mid [x]_{\sigma_i}\not\subseteq [x]_{\sigma_{i+1}}\}$, then $\mu(E_i)<\eps_i$. Choose $\eps_{i+1}=\eps_i/2$ for $i\geq 0$, then for any $x$ in the set $F=\cup_i(X\setminus E_i)$ with measure $\mu(F)>1-\eps$, we have a strictly increasing sequence of equivalence classes $[x]_{\sigma_1}\subsetneq [x]_{\sigma_2}\subsetneq[x]_{\sigma_3}\subsetneq\dots$.\\
We say $F_n=\cup_{i=n}^\infty(X\setminus E_i)$ and define a new sequence of equivalence relations $\tau_n$:
\begin{equation}
    [x]_{\tau_n}=\begin{cases}
    [x]_{\sigma_n|_{F_n}}, & \text{ if $x\in F_n$ }\\
    \{x\}, & \text{ if $x\notin F_n$}
    \end{cases}
\end{equation}
We see that $\{F_n\}_{n\in\N}$ is increasing: For any $n\in\N$, if $x\notin F_n$ then $[x]_{\tau_n}=\{x\}\subset[x]_{\tau_{n+1}}$; if $x\in F_n$, then $[x]_{\sigma_n|_{F_n}}\subseteq[x]_{\sigma_{n+1}|_{F_{n+1}}}$. Hence $\tau_n$ is increasing. For $\sigma_n$, we have
$\alpha^\sigma_n\to \infty $ in measure. 
Since $\{\sigma_n\}_{n\in\N}$ is a subsequence of $\{\theta_n\}_{n\in\N}$, there exists a subsequence $\{s_n\}_{n\in\N}\subseteq\{k_n\}_{n\in\N}$ such that 
\begin{equation}
\{x\mid\alpha_n^\sigma(x)\geq s_n\}\subseteq\{x\mid[x]_{\sigma_n}\subseteq[x]_{\sigma_{n+1}}\}\subseteq F_n
\end{equation} 
Hence for any $g\in G$, there exists $n\in\N$ such that $gx\in F_n$ and $gx\in[x]_{\sigma_n|_{F_n}}$. Therefore since $\mu(F_n)\to 1$, we have that $\cup_n\tau_n$ is exactly the orbit equivalence relation, and hence a hyperfinite equivalence relation.
\end{proof}

Assume $m$ is the measure supposed in the Remark following Theorem \ref{iff}. Then for any measurable $A\subset X$, we have
\begin{align}
    \mu(A)=\sum_{s\in S}\mu(s^{-1}A)m(s)\geq \mu(s^{-1}A)m(s)\\
    \implies m(s)\leq \frac{\mu(sA)}{\mu(A)}\leq \frac{1}{m(s^{-1})}\\
    \implies m(s)\leq \frac{\d s_*\mu}{\d \mu}\leq \frac{1}{m(s^{-1})}
\end{align}

The following is the proof of Proposition \ref{generatingset}, which is actually similar to the proof of lemma \ref{weakct}.
\begin{proof}
Assume $\frac{\d s_*\mu}{\d\mu}$ is $L^\infty$-bounded for each $s\in S_1,S_2$. Then there exists $ k\in \N$ such that $S_1^k\supseteq S_2$, hence we have
\begin{equation}
    \partial_{S_2}\theta_n\subset\partial_{S^k_1}\theta_n\subset S^{k-1}_1\partial_{S_1}\theta_n
\end{equation}
where the $\partial_{S_1}$ and $\partial_{S_2}$ represent the boundary of a set with respect to the different generating sets. Set $M=\max\limits_{s\in S_1}{\|\frac{\d s_*\mu}{\d\mu}\|_{\infty}}$, then $\mu(\partial_{S_2}\theta_n)\leq M^{k-1}\mu(\partial_{S_1} \theta_n)$
If $\frac{\d s_*\mu}{\d\mu}$ is $L^p$-bounded for some $p>1$,set $M_p=\max\limits_{s\in S_1}{\|\frac{\d s_*\mu}{\d\mu}\|_{p}}$. From H$\ddot{o}$lder's result (\ref{holder1}) and (\ref{holder2}) we have
\begin{equation}
    \mu(\partial_{S_2}\theta_n)\leq\mu(S_1^{k-1}\{\partial_{S_1}\theta_n\})\leq  M_p^{k-1}(\mu(\partial_{S_1}\theta_n))^{1/q}
\end{equation}
where $q$ is dual to $p$.
\end{proof}
\section{Some Special Cases of Actions}\label{special}
In this section, we will obtain a stronger result if the action is measure-preserving. We will also compare the isoperimetric profiles of group actions with the classical isoperimetric profiles of groups.
\begin{prop}\label{folnerrelation}
Assume the action of the finitely generated group $G$ on $ (X,\mu)$ is measure-preserving and essentially free. Then $\mathcal{I}_{G\act X}(n)\geq \mathcal{I}(n)$ for $n$ large enough.
\end{prop}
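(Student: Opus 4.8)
The plan is to reduce the statement to the group profile class-by-class and then average over the space using that the action is measure-preserving. Fix a finite subequivalence relation $\theta_n \subset \theta$ with $|\theta_n|\le n$; it suffices to show $\mu(\partial\theta_n)\ge \mathcal{I}_G(n)$ and then take the infimum over all such $\theta_n$. Since the action is essentially free, for $\mu$-a.e. $x$ the orbit map $g\mapsto gx$ is a bijection $G\to Gx$, so there is a finite set $F_x\subset G$ with $|F_x|=|[x]_{\theta_n}|\le n$ and $[x]_{\theta_n}=\{gx\mid g\in F_x\}$. A direct check using $\partial F=\{g\in F\mid sg\notin F \text{ some }s\in S\}$ shows that $gx\in\partial\theta_n$ if and only if $g\in\partial F_x$, so $|\partial[x]_{\theta_n}|=|\partial F_x|$; changing the base point inside the class replaces $F_x$ by a right translate $F_xh$, and since $\partial(F_xh)=(\partial F_x)h$ the ratio is unchanged. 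Hence
\[
g(x)\defeq\frac{|\partial[x]_{\theta_n}|}{|[x]_{\theta_n}|}=\frac{|\partial F_x|}{|F_x|}
\]
is a well-defined, measurable function that is constant on $\theta_n$-classes.

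Next I would record the pointwise lower bound. By definition of the group profile, $|\partial F_x|/|F_x|\ge \mathcal{I}_G(|F_x|)$, and since $\mathcal{I}_G$ is non-increasing (the infimum is over a larger family as $n$ grows) and $|F_x|\le n$, we get $g(x)\ge \mathcal{I}_G(|F_x|)\ge \mathcal{I}_G(n)$ for $\mu$-a.e.\ $x$. Note that the correspondence works even when we restrict to connected subequivalence relations, since connected sets form a subfamily of all finite subsets of $G$.

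The crux is the averaging identity
\[
\mu(\partial\theta_n)=\int_X g(x)\,\d\mu(x),
\]
which is exactly where measure preservation is used. I would prove it by the mass transport principle applied to the pmp equivalence relation $\theta_n$ (it is pmp because $\theta_n\subset\theta$ and $\theta$ is measure-preserving). Taking $F(x,y)=\mathbf{1}[x\,\theta_n\,y]\,\mathbf{1}[y\in\partial\theta_n]/|[x]_{\theta_n}|$, summing in $y$ over the class of $x$ gives $\sum_y F(x,y)=g(x)$, while the transposed sum collapses to $\sum_y F(y,x)=\mathbf{1}[x\in\partial\theta_n]$ (because $|[y]_{\theta_n}|=|[x]_{\theta_n}|$ for $y\,\theta_n\,x$ and there are $|[x]_{\theta_n}|$ such $y$). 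Integrating the mass-transport equality $\int_X\sum_y F(x,y)\,\d\mu=\int_X\sum_y F(y,x)\,\d\mu$ yields the identity. Combining with the pointwise bound and $\mu(X)=1$ gives $\mu(\partial\theta_n)=\int_X g\,\d\mu\ge \mathcal{I}_G(n)$, and taking the infimum proves $\mathcal{I}_{G\act X}(n)\ge \mathcal{I}_G(n)$.

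The main obstacle is the averaging identity: here one genuinely needs the action to be measure-preserving rather than merely nonsingular, since for a quasi-invariant action the boundary points of a class may carry a disproportionate share of the measure, so the measure of $\partial\theta_n$ need not equal the average of the combinatorial boundary ratios over $X$. A secondary point to verify carefully is the measurability of $x\mapsto|[x]_{\theta_n}|$ and $x\mapsto|\partial[x]_{\theta_n}|$, which is standard for finite pmp equivalence relations. I expect the argument to yield the inequality for every $n\ge 1$ rather than only for large $n$, so I would reconcile this with the ``$n$ large enough'' phrasing, which appears to be a cautionary restriction and should not be essential.
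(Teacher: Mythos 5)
Your proposal is correct and follows essentially the same route as the paper: both arguments rest on the averaging identity $\mu(\partial\theta_n)=\int_X \frac{|\partial[x]_{\theta_n}|}{|[x]_{\theta_n}|}\,\d\mu(x)$ (which the paper gets by disintegrating over $X/\theta_n$ and you get by mass transport --- the same computation in different clothing) together with the identification, via essential freeness, of each class with a finite subset $F_x\subset G$ of size at most $n$. The only cosmetic difference is at the end: you bound the integrand pointwise below by $\mathcal{I}_G(n)$ and integrate, while the paper selects a single class whose ratio is at most the average; both yield the inequality for every $n$, confirming your remark that the ``$n$ large enough'' qualifier is not needed.
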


\begin{proof}
Let $\theta_n\subset \theta$ be an subequivalence relation of the orbit equivalence relation such that $\left|\theta_n\right|\leq n$. For each equivalence class $[x]\defeq[x]_{\theta_n}$, define the measure $\mu_{[x]}$ to be the counting measure on $[x]$. Consider the projection $\pi:X\to X/\theta_n$, then by the disintegration we have
\[
0<\mu(\partial \theta_n)= \int_{X/\theta_n}\mu_{[x]}(\partial \theta_n)\d\pi_*\mu([x])=\int_{X/\theta_n}\frac{|\partial [x]|}{|[x]|}\d\pi_*\mu([x])= \int_X\frac{|\partial[x]|}{|[x]|}\d \mu(x)
\]
hence there exists $x\in X$ such that $|\partial[x]|\leq \mu(\partial\theta_n)|[x]|$. Since the action is free, $g\mapsto gx$ is bijective for a.e. $x\in X$. So we can choose subset $F\subset G$ such that $Fx=[x]$, then $\mu(\partial\theta_n)\geq\frac{|\partial F|}{|F|}$ and hence $\mathcal{I}_{G\act X}(n)\geq \mathcal{I}(n)$
\end{proof}

By Proposition \ref{folnerrelation}, $\mathcal{I}_{G\act X}(n)\to 0 $ implies $ \mathcal{I}_G(n)\to 0$. On the other hand, if $G$ is amenable, then any group action is amenable and the stabilizer groups are amenable. Therefore, if the action is ergodic then Theorems \ref{iff} and \ref{essfree} tells us the decay of $\mathcal{I}_{G\act X}(n)$. So we have the following
\begin{cor}
Assume $G$ is finitely generated and $G\act(X,\mu)$ is a probability measure preserving and essentially free action. If $\mathcal{I}_{G\act X}(n)\to 0$, then $G$ is amenable. The converse holds if the action is also ergodic.
\end{cor}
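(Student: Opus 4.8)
The plan is to assemble the corollary from results already established, treating the two implications separately and emphasizing that only the converse requires ergodicity. For the forward implication, that $\mathcal{I}_{G\act X}(n)\to 0$ forces $G$ to be amenable, I would invoke Proposition \ref{folnerrelation}: since the action is probability measure preserving and essentially free, it gives $\mathcal{I}_{G\act X}(n)\geq \mathcal{I}_G(n)$ for all sufficiently large $n$. Hence $\mathcal{I}_{G\act X}(n)\to 0$ immediately yields $\mathcal{I}_G(n)\to 0$, and the classical characterization recalled in Section \ref{background} — that for a finitely generated group with finite symmetric generating set one has $\mathcal{I}_G(n)\to 0$ if and only if $G$ is amenable — lets me conclude that $G$ is amenable. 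Notably, no ergodicity or integrability hypothesis is needed for this direction.

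For the converse I would assume $G$ amenable and that the action is additionally ergodic, aiming to apply the ($\Rightarrow$) direction of Theorem \ref{iff}; two hypotheses of that theorem must be checked. First, the integrability assumption: because the action is measure preserving, each Radon--Nikodym derivative $\d s_*\mu/\d\mu$ equals $1$ almost everywhere, hence lies in $L^\infty$, so the action is BQI and in particular $(1+\delta)$-QI for every $\delta>0$. Second, I must verify that the action is amenable in the sense of Zimmer: since $G$ is amenable every action of $G$ is Zimmer-amenable, and the stabilizers $G_x$, being subgroups of $G$, are amenable (indeed trivial $\mu$-a.e. by essential freeness), so Theorem \ref{essfree} applied to the ergodic action shows the orbit equivalence relation $\mathscr{R}_G$ is amenable. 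With both hypotheses in place, Theorem \ref{iff} gives $\mathcal{I}_{G\act X}(n)\to 0$, completing the converse.

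I do not expect a genuine obstacle, since the corollary is a bookkeeping consequence of Proposition \ref{folnerrelation}, the group-level characterization, and Theorem \ref{iff}. The one point demanding care is the asymmetry in hypotheses: the forward direction runs purely through the lower bound $\mathcal{I}_{G\act X}(n)\geq \mathcal{I}_G(n)$ and needs neither ergodicity nor integrability, whereas the converse relies on ergodicity to pass from Zimmer-amenability of the action to amenability of the orbit relation via Theorem \ref{essfree}, and on the automatic QI bound to invoke Theorem \ref{iff}. I would make sure the write-up flags exactly where ergodicity enters, so that the statement is not over-hypothesized and the reader sees why it is omitted from the first half.
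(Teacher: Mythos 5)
Your proposal is correct and follows essentially the same route as the paper: the forward direction via the lower bound $\mathcal{I}_{G\act X}(n)\geq \mathcal{I}_G(n)$ from Proposition \ref{folnerrelation} together with the classical characterization of amenability, and the converse via Theorem \ref{essfree} and the ($\Rightarrow$) direction of Theorem \ref{iff}, using that a measure-preserving action is automatically BQI. Your write-up is in fact more careful than the paper's one-line derivation in spelling out where ergodicity and the integrability hypothesis are actually used.
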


The next lemma tells us that we need only consider connected subequivalence relations to compute the isoperimetric action function. Say $\Theta$ is the set of subequivalence relations of the orbit equivalence relation $\theta$ and $\Theta_c\subset \Theta$ are the connected ones. Then each disconnected $\varphi\in\Theta$ corresponds to a connected equivalence relation with smaller equivalence class size, and measure of the boundary will not decrease. Therefore we have

\begin{lem}
$\inf\limits_{\theta_n\subset \Theta}\mu(\partial \theta_n)=\inf\limits_{\theta_n\subset \Theta_c}\mu(\partial \theta_n)$
\end{lem}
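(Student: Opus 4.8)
The plan is to prove the two inequalities separately. Since every connected subequivalence relation is in particular a subequivalence relation, we have $\Theta_c \subseteq \Theta$, and an infimum over a larger family can only be smaller; this immediately gives $\inf_{\theta_n \in \Theta} \mu(\partial\theta_n) \leq \inf_{\theta_n \in \Theta_c}\mu(\partial\theta_n)$. All the content lies in the reverse inequality, for which I will show that every $\theta_n \in \Theta$ can be replaced by a connected $\theta_n' \in \Theta_c$ of no larger class size and with \emph{exactly} the same boundary.

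Given $\theta_n \in \Theta$ with $|\theta_n| \leq n$, I define $\theta_n'$ by declaring $(a,b) \in \theta_n'$ iff $a$ and $b$ lie in a common $\theta_n$-class and are joined by a path $a, s_1 a, s_2 s_1 a, \ldots, b$ of generator steps all of whose vertices remain in $[a]_{\theta_n}$. Equivalently, the $\theta_n'$-classes are the connected components of the $\theta_n$-classes viewed as subgraphs of the orbit graphing. This relation is connected by construction; its classes are subsets of the $\theta_n$-classes, so $|\theta_n'| \leq n$; and it is an equivalence relation (symmetry uses that $S$ is symmetric, and transitivity uses that two paths inside a common class concatenate to a path inside that class). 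Measurability follows because the classes have size at most $n$, so connectivity is witnessed by paths of length $k < n$; thus $\theta_n'$ is the finite union, over $k < n$ and over generator sequences $(s_1,\ldots,s_k) \in S^k$, of the relations ``$b = s_k\cdots s_1 a$ with every partial product lying in $[a]_{\theta_n}$,'' each of which is measurable since $\theta_n$ is a measurable subset of $X \times X$ and the graphs of the generator maps are measurable.

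The key observation is that passing to connected components does not change the boundary at all. If $s \in S$ and $sx \in [x]_{\theta_n}$, then the single edge $(x, sx)$ already witnesses that $sx$ lies in the connected component of $x$, so $sx \in [x]_{\theta_n'}$; conversely $[x]_{\theta_n'} \subseteq [x]_{\theta_n}$ trivially. Hence $\{s \in S : sx \in [x]_{\theta_n}\} = \{s \in S : sx \in [x]_{\theta_n'}\}$ for every $x$, which means $x \in \partial\theta_n$ if and only if $x \in \partial\theta_n'$, so $\partial\theta_n = \partial\theta_n'$ as subsets of $X$. Therefore $\mu(\partial\theta_n') = \mu(\partial\theta_n)$, and since $\theta_n' \in \Theta_c$ we obtain $\inf_{\eta \in \Theta_c}\mu(\partial\eta) \leq \mu(\partial\theta_n') = \mu(\partial\theta_n)$ for every $\theta_n \in \Theta$. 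Taking the infimum over $\theta_n \in \Theta$ yields the reverse inequality, and hence the claimed equality.

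I expect the only genuinely delicate point to be the verification that $\theta_n'$ is measurable, which rests on the uniform bound $|\theta_n| \leq n$ forcing connectivity to be witnessed by paths of bounded length. Everything else reduces to the elementary remark that the boundary operator only detects the immediate generator-neighbours of a point, and those neighbours automatically lie in the same connected component.
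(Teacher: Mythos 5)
Your proof is correct and follows the same strategy the paper only sketches in one sentence: replace an arbitrary subequivalence relation by the connected components of its classes and compare boundaries. In fact your argument is sharper than the paper's remark (which merely asserts the boundary ``will not decrease''), since you verify the exact set equality $\partial\theta_n = \partial\theta_n'$ and supply the measurability check via bounded-length paths, both of which the paper omits.
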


Now, according to what we know from Section \ref{sectionrokhlin}, we can give an upper bound for the isoperimetric profile of actions which is related to the multi-tiles and the isoperimetric profiles of groups.
\begin{thm}\label{cor2}
Let G be a finitely generated amenable group and $(T_n)=(T_{1n},\dots,T_{Nn})$ a sequence of multi-tiles with $|T_{in}|\leq n$ for $1\leq i\leq N$. Then for any free probability measure preserving action $G\act (X,\mu)$, we have 
    \begin{equation}
        \mathcal{I}(n)\leq \max_{1\leq i\leq N}\frac{|\partial T_{in}|}{|T_{in}|}
    \end{equation}
\end{thm}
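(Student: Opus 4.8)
The plan is to feed the multi-tile $(T_n)=(T_{1n},\dots,T_{Nn})$ into the Rokhlin Lemma for multi-tiles (Theorem \ref{rokhlin}) and read off a subequivalence relation whose classes are exactly the fibres of the resulting towers. Fix $\eps>0$. Theorem \ref{rokhlin} produces measurable bases $A_1,\dots,A_N\subset X$ such that the translates $\{tA_i:t\in T_{in},\ 1\leq i\leq N\}$ are pairwise disjoint and $\mu(\bigcup_i T_{in}A_i)>1-\eps$. Write $Y=X\setminus\bigcup_i T_{in}A_i$, so $\mu(Y)<\eps$.

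Next I would define $\theta_n$ by declaring, for each $i$ and each base point $a\in A_i$, the finite set $T_{in}a=\{ta:t\in T_{in}\}$ to be one equivalence class, and leaving every point of $Y$ as a singleton. Two things need checking. First, this really is a subequivalence relation of the orbit relation: each class $T_{in}a$ lies in the single orbit $Ga$, and the disjointness of the towers together with freeness forces the classes $T_{in}a$ to be pairwise disjoint (if $ta=t'b$ with $t\in T_{in}$, $t'\in T_{jn}$ then $tA_i\cap t'A_j\neq\varnothing$, so $i=j$, $t=t'$, and hence $a=b$ by freeness), so they genuinely partition $\bigcup_i T_{in}A_i$. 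Second, each class has size $|T_{in}a|=|T_{in}|\leq n$, again because $g\mapsto ga$ is injective for a.e. $a$; thus $|\theta_n|\leq n$ and $\theta_n$ is an admissible competitor in Definition \ref{folneraction}.

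The heart of the argument is the boundary computation, and this is where freeness does the real work. For a point $x=ta$ with $t\in T_{in}$, $a\in A_i$, freeness gives $sx=sta\in[x]_{\theta_n}=T_{in}a$ if and only if $st\in T_{in}$; hence $x\in\partial\theta_n$ exactly when $st\notin T_{in}$ for some $s\in S$, i.e. exactly when $t\in\partial T_{in}$. So the part of $\partial\theta_n$ lying in the $i$-th tower is precisely $(\partial T_{in})A_i$. Using that the action is measure-preserving and the translates $\{tA_i\}$ are disjoint, I can compute $\mu((\partial T_{in})A_i)=|\partial T_{in}|\,\mu(A_i)$ and $\mu(T_{in}A_i)=|T_{in}|\,\mu(A_i)$, while the contribution of $Y$ to $\partial\theta_n$ is at most $\mu(Y)<\eps$.

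Putting these together and bounding $|\partial T_{in}|\leq(\max_j|\partial T_{jn}|/|T_{jn}|)\,|T_{in}|$ termwise gives
\begin{equation}
\mu(\partial\theta_n)\leq\sum_{i=1}^N|\partial T_{in}|\,\mu(A_i)+\eps\leq\Big(\max_{1\leq j\leq N}\frac{|\partial T_{jn}|}{|T_{jn}|}\Big)\sum_{i=1}^N\mu(T_{in}A_i)+\eps\leq\max_{1\leq j\leq N}\frac{|\partial T_{jn}|}{|T_{jn}|}+\eps,
\end{equation}
where the last step uses $\sum_i\mu(T_{in}A_i)=\mu(\bigcup_i T_{in}A_i)\leq 1$. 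Since $\theta_n$ is a competitor, $\mathcal{I}_{G\act X}(n)\leq\mu(\partial\theta_n)$, and letting $\eps\to 0$ yields the claim. The main obstacle, and the step I would treat most carefully, is the exact identification $x=ta\in\partial\theta_n\iff t\in\partial T_{in}$: it is what converts the measure-theoretic boundary into the purely combinatorial boundary of the tiles, and it relies essentially on both freeness (to make $g\mapsto ga$ injective) and measure-preservation (to replace the measures of the translates $tA_i$ by the single value $\mu(A_i)$). In the non-measure-preserving setting this clean equality breaks down, which is why the theorem is restricted to free probability measure-preserving actions.
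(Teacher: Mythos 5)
Your proposal is correct and follows essentially the same route as the paper: both feed the multi-tile into the Rokhlin Lemma (Theorem \ref{rokhlin}), take the subequivalence relation whose classes are the tower fibres $T_{in}a$ (singletons off the towers), and reduce the measure of $\partial\theta_n$ to the combinatorial ratios $|\partial T_{in}|/|T_{in}|$ plus an $\eps$ error. The only cosmetic difference is that the paper packages the boundary computation via the disintegration identity $\mu(\partial\theta_n)=\int_X \frac{|\partial[x]|}{|[x]|}\,\d\mu$, whereas you compute $\mu((\partial T_{in})A_i)=|\partial T_{in}|\mu(A_i)$ directly; these are the same estimate, and your explicit verification that $x=ta\in\partial\theta_n\iff t\in\partial T_{in}$ is a step the paper leaves implicit.
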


Now we can prove Theorem \ref{cor2}.
\begin{proof}
Fix $n$ and consider the multi-tile $(T_1,\dots,T_N)$ with shapes $|T_i|<n$. By Theorem \ref{rokhlin}, for any $\eps>0$ there exist $A_1,\dots,A_N$ such that $\{t_iA_i\mid t_i\in T_i, 1\leq i\leq N \}$ are pairwise disjoint and $\mu(\cup_{i=1}^NT_iA_i)>1-\eps$. Now define the equivalence relation $\varphi$ as follows
\begin{equation}
    [x]_{\varphi}=
    \begin{cases}
        T_ia_i, &\text{ if } x\in t_iA_i \text{ for some } a_i\in A_i, 1\leq i\leq N.\\
        \{x\}, &\text{ otherwise. }
    \end{cases}
\end{equation}
Now the equivalence classes of $\varphi$ have size at most $n$. Therefore,
\begin{align}
    \mu(\partial \varphi_n)= & \int_X\frac{|\partial[x]|}{|[x]|}\d\mu=\sum_{i=1}^N\int_{T_iA_i}\frac{|\partial[x]|}{|[x]|}\d\mu+\int_{X\setminus\sqcup T_iA_i}\frac{|\partial[x]|}{|[x]|}\d\mu\\
    \leq & \sum_{i=1}^N\int_{T_iA_i}\frac{|\partial{T_i}|}{|T_i|}\d\mu +\eps\\
    = & \sum_{i=1}^N\frac{|\partial T_i|}{|T_i|}\mu(T_{i}A_i)+\eps\\
    \leq & \max\limits_{1\leq i\leq N}\frac{|\partial T_i|}{|T_i|}(1-\eps)+\eps\xrightarrow{\eps\to 0}\max\limits_{1\leq i\leq N}\frac{|\partial T_i|}{|T_i|}
\end{align}
\end{proof}
Hence Theorem \ref{folnertilings} is a corollary of Theorem \ref{cor2} and Theorem \ref{rokhlin}. In particular, let's consider a probability measure preserving free group action. If a sequence of F\o lner multitilings has F\o lner rates which are asymptotic to the isoperimetric profile of the group, then the isoperimetric profile of the group will be asymptotic to the isoperimetric profile of the action.\\

\noindent \textbf{Remark.} There are many groups that have a sequence of tiles with the isoperimetric ratio asymptotic to $\mathcal{I}_G(n)$. A trivial example is $\Z$ or free abelian groups. Although it is not clear which groups have this property, some non-abelian nilpotent groups also satisfy it.\\
\begin{ex}
    An example for non-abelian nilpotent group.
\end{ex}
Consider the discrete Heisenberg group $H_3(\Z)=\{(\begin{smallmatrix}
1 & a & c\\
0 & 1 & b\\
0 & 0 & 1
\end{smallmatrix}) =y^bz^cx^a\mid a,b,c\in\Z\}$. Consider the bijection $H_3(\Z)\to \Z^3$ with $y^bz^cx^a\mapsto (a,b,c)$. We know that if $b=0$ then $(\{(a,0,c)\}, \cdot)\cong (\Z^2,+)$, and for the other direction $(a,b,c)(0,\pm1,0)=(a,b\pm1,a\pm c)$. We choose $F_n=[0,n]\times[0,n]\times[0,n^2]\cap\Z^3$. Translating this cuboid $F_n$ by $(k(n+1),0,0)$ or $(0,0,k(n+1)), k\in\Z$ will still result in the same shapes, just like in $(\Z^3, +)$. If we translate by $(0,k(n+1),0)$, then consider $F_n=\sqcup_{i=0}^n{F_{in}}$, where $ F_{in}=\{(a,b,c)\in F_n \mid a=i\}$, and we have
\begin{equation}
(0,k(n+1),0)F_{in}=\{(a,b+k(n+1),c+k(n+1)i)\in\Z^3 \mid (a,b,c)\in F_{in} \}
\end{equation}
So clearly, for each $n\in\N$, then $F_n$ is a tile with the center set
\begin{equation}
C_n={a_n\cdot b_n \cdot c_n:a_n\in A_n, b_n\in B_n,c_n\in C_n},
\end{equation}
where 
\begin{align*}
    &A_n=\{(0,k_1(n+1),0)k_1\in \Z\}\\
    &B_n=\{(k_2(n+1),0,0)k_2\in \Z\}\\
    &A_n=\{0,0,(k_1(n+1))k_3\in \Z\}
\end{align*}
By computation, it is easy to find that $\frac{|\partial F_n|}{|F_n|}=\frac{4n^2+2n+5}{(n+1)(n^2+1)}\sim \frac{4}{n}$. According to the Theorem \ref{iff} from Stankov's paper \cite{BSgroupfolner}, which gives a lower bound of the $\mathcal{I}_G(n)$, this convergence rate will be exactly the same as $\mathcal{I}(n)$ up to a multiplicative constant.

\bibliography{refs}
\nocite{*}
\end{document}